\newtheorem{remark}[theorem]{Remark}
\title{Lyapunov Orbits at $L_2$ and Transversal Intersections of Invariant Manifolds in the Jupiter-Sun Planar Restricted Circular Three Body Problem}
\author{Maciej J. Capi\'nski \thanks{The work was initiated during a visit of the author to University of Texas at Austin, sponsored by the Ko\'sciuszko Foundation. The work has been supported by the Polish State Ministry of Science and Information Technology grant N201 543238.}
        }
\begin{document}

\maketitle

\begin{abstract}
We present a computer assisted proof of existence of a family of Lyapunov orbits which stretches from $L_2$ up to half the distance to the smaller primary in the Jupiter-Sun planar restricted circular three body problem. We then focus on a small family of Lyapunov orbits with energies close to comet Oterma and show that their associated invariant manifolds intersect transversally. Our computer assisted proof provides explicit bounds on the location and on the angle of intersection.
\end{abstract}

\begin{keywords} 
Invariant manifolds, restricted three body problem, cone conditions, parameterization method, computer assisted proofs
\end{keywords}

\begin{AMS}
	37D10,    	%(Dynamical systems) Invariant manifold theory
	37N05,    	%Dynamical systems in classical and celestial mechanics
	34C20,    	%(ODE->Qualitative theory) Transformation and reduction of equations and systems, normal forms
	34C45,    	%Invariant manifolds
	% Mechanics of particles and systems
	70F07,    	%Three-body problems
	70F15   	%Celestial mechanics
\end{AMS}

\pagestyle{myheadings}
\thispagestyle{plain}
\markboth{MACIEJ J. CAPI\'NSKI}{INTERSECTIONS OF INVARIANT MANIFOLDS IN THE PRC3BP}

%TCIDATA{OutputFilter=latex2.dll}
%TCIDATA{Version=5.00.0.2606}
%TCIDATA{LaTeXparent=0,0,online-edit.tex}
\section{Introduction\label{sec:introduction}}
The Planar Restricted Circular Three Body Problem (PRC-3BP) has been extensively studied throughout literature. The model has applications in space mission design \cite{GJSM, GKLMMR}, explains symbolic dynamics phenomena observed in trajectories of comets \cite{KLMR} and can be used for study of diffusion estimates \cite{JorbaSimo94, JorbaVillanueva98}. All the above mentioned are associated with dynamics along invariant manifolds of the system. In this paper we discuss how existence of such manifolds can be proved within explicit bounds using rigorous-computer-assisted techniques.

We focus on dynamics associated with the fixed point $L_2$, its associated center manifold and stable/unstable manifolds. The problem has been studied by Llibre, Martinez and Simo \cite{Simo} where under appropriate conditions on parameters of the system existence and intersections of such manifolds has been proved analytically. In the work of Koon, Lo, Marsden and Ross \cite{KLMR} such invariant manifolds and their associated symbolic dynamics have been used to numerically explain a peculiar trajectory of the comet Oterma in the vicinity of Jupiter. Such symbolic dynamics has later been proved using rigorous-computer-assisted computations by Wilczak and Zgliczy\'nski \cite{WZ1, WZ2}. The work presented in this paper can be viewed as an extension of last-mentioned. Results \cite{WZ1, WZ2} were obtained using purely topological arguments. They focus on homoclinic and heteroclinic tangle between periodic orbits, without the detection of the manifolds themselves or angles of their intersections. Here we address these issues.

In this paper we shall first present a method for detecting of families of Lyapunov orbits in the PRC3BP. It is designed as a tool for rigorous-computer-assisted proofs. We apply the method to obtain a family that spans up to half a distance between the fixed point $L_2$ and the smaller primary in the Jupiter-Sun system. This is our first main result, which is stated in Theorem \ref{th:main-1}. The method is based on a combination of interval Newton method and implicit function theorem. 

We then consider a small family of Lyapunov orbits with energies close to the energy of comet Oterma. We prove that the family is normally hyperbolic, and give a tool for obtaining rigorous bounds for its unstable and stable fibers. The tool is based on a topological approach combined with a parameterization method. We then show how fibers can be propagated to prove transversal intersections between stable and unstable manifolds of Lyapunov orbits. We investigate an intersection associated with manifolds which span from the Lyapunov orbit and circle around the larger primary. We obtain explicit bounds on the location of intersection and also on its angle. This is the second main result of the paper, which is stated in Theorem \ref{th:main}.

Both methods which we propose are tailor made for the PRC3BP. We make use of the preservation of energy and reversibility of the system. Thanks to this our rigorous bounds for the investigated manifolds are quite sharp. 

For our method we also develop a more general tool which can be applied for the detection of  unstable/stable manifolds of saddle - center fixed points. It is a generalization of the wok of Zgliczy\'nski \cite{Z1}. This is the subject of section \ref{sec:unstable-topological}.

The paper is organized as follows. Section \ref{sec:preliminaries} includes preliminaries which give an introduction to the PRC3BP, the interval Newton method, and introduce some notations. In section \ref{sec:Lap-orb-man} we present a method for detection of families of Lyapunov orbits and apply it to the Jupiter-Sun system. In section \ref{sec:outline} we outline the results for the intersections of invariant manifolds, which are then proved throughout the remainder of the paper. In section \ref{sec:hyp-energy} we show how to prove that Lyapunov orbits are hyperbolic and foliated by energy. In section \ref{sec:unstable-topological} we give a topological tool for detection of unstable manifolds of saddle-center fixed points. The method is then combined with parametrization method in section \ref{sec:parameterization-fibers} to obtain rigorous bounds on the intersections of invariant manifolds. Sections \ref{sec:closing-remarks}, \ref{sec:ackn} and \ref{sec:appendix} contain respectively closing remarks, acknowledgements and the appendix.

\section{Preliminaries\label{sec:preliminaries}}

\subsection{The Planar Restricted Circular Three Body Problem}

In the Planar restricted circular three body problem (PRC3BP) we consider the
motion of a small massless particle under the gravitational pull of two larger
bodies (which we shall refer to as primaries) of mass $\mu$ and $1-\mu$. The primaries move around the origin on circular orbits of period $2\pi$ on the same plane
as the massless body. In this paper we shall consider the mass parameter
$\mu=0.0009537$, which corresponds to the rescaled mass of Jupiter in the
Jupiter-Sun system. 

The Hamiltonian of the problem is given by \cite{AM}%
\begin{equation}
H(q,p,t)=\frac{p_{1}^{2}+p_{2}^{2}}{2}-\frac{1-\mu}{r_{1}(t)}-\frac{\mu}%
{r_{2}(t)}, \notag%\label{eq:H-for-circular-problem}%
\end{equation}
where $\left(  p,q\right)  =\left(  q_{1},q_{2},p_{1},p_{2}\right)  $ are the
coordinates of the massless particle and $r_{1}(t)$ and $r_{2}(t)$ are the
distances from the masses $1-\mu$ and $\mu$ respectively. 

After introducing a
new coordinates system $(x,y,p_{x},p_{y})$%
\begin{equation}%
\begin{array}
[c]{ll}%
x=q_{1}\cos t+q_{2}\sin t, & \quad p_{x}=p_{1}\cos t+p_{2}\sin t,\\
y=-q_{1}\sin t+q_{2}\cos t, & \quad p_{y}=-p_{1}\sin t+p_{2}\cos t,
\end{array}
\label{eq:x,y-coordinates}%
\end{equation}
which rotates together with the primaries, the primaries become
motionless (see Figure \ref{fig:forbidden-region}) and one obtains \cite{AM} an autonomous Hamiltonian%
\begin{equation}
H(x,y,p_{x},p_{y})=\frac{(p_{x}+y)^{2}+(p_{y}-x)^{2}}{2}-\Omega(x,y),
\label{eq:H-PRC3BP}%
\end{equation}
where%
\begin{align}
\Omega(x,y)  &  =\frac{x^{2}+y^{2}}{2}+\frac{1-\mu}{r_{1}}+\frac{\mu}{r_{2}%
},\nonumber\\
r_{1}  &  =\sqrt{(x-\mu)^{2}+y^{2}},\quad r_{2}=\sqrt{(x+1-\mu)^{2}+y^{2}}.
\notag %\label{eq:r1-r2}%
\end{align}

The motion of the particle is given by 
\begin{equation}
\dot{q}=J\nabla H(q), \label{eq:PRC3BP}%
\end{equation}
where $q=(x,y,p_{x},p_{y})\in\mathbb{R}^{4}$, $J=\left(
\begin{array}
[c]{cc}%
0 & \text{id}\\
-\text{id} & 0
\end{array}
\right)  $ and id is a two dimensional identity matrix.

The movement of the flow (\ref{eq:PRC3BP}) is restricted to the hypersurfaces
determined by the energy level $h$,
\begin{equation}
M(h)=\{(x,y,p_{x},p_{y})\in\mathbb{R}^{4}|H(x,y,p_{x},p_{y})=h\}.\label{eq:M-energy}
\end{equation}
This means that movement in the $x,y$ coordinates is restricted to the so
called Hill's region defined by
\begin{equation}
R(h)=\{(x,y)\in\mathbb{R}^{2}|\Omega(x,y)\geq-h\}.\nonumber
\end{equation}
\begin{figure}[h]
\begin{center}
\includegraphics[height=2.4in]{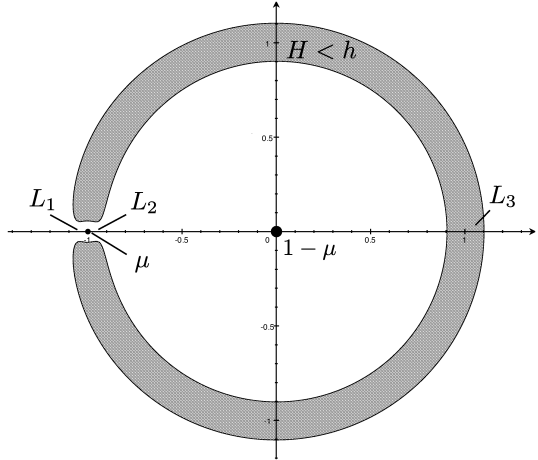}
\end{center}
\caption{The Hill's region for the energy level $h=1.515$ of comet Oterma in the Jupiter-Sun system.}
\label{fig:forbidden-region}
\end{figure}

The problem has three equilibrium points $L_{1},L_{2},L_{3}$ on the $x$-axes (see Figure \ref{fig:forbidden-region}).
We shall be interested in the dynamics associated with $L_{2}$, and with orbits of energies higher than that of $L_2$. The linearized
vector field at the point $L_{2}$ has two real and two purely imaginary
eigenvalues, thus by the Lyapunov theorem (see for example
\cite{Simo}) for energies $h$ larger and sufficiently close to $H(L_{2})$
there exists a family of periodic orbits parameterized by energy emanating
from the equilibrium point $L_{2}.$ 
Numerical evidence shows that this family extends up to and even beyond the smaller primary $\mu$ \cite{Broucke}. 

\begin{figure}[h]
\begin{center}
\includegraphics[height=1.8in]{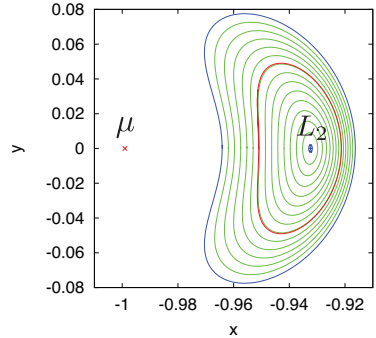}
\end{center}
\caption{Considered by us family of Lyapunov orbits in green (spanning between two orbits in blue), together with the Lyapunov orbit for the energy of the comet Oterma $h=1.\,\allowbreak515$ in red.}
\label{fig:LapOrbits}%
\end{figure}

The PRC3BP admits the following reversing symmetry 
\[
S(x,y,p_{x},p_{y})=(x,-y,-p_{x},p_{y}).
\] 
For the flow $\phi(t,q)$ of (\ref{eq:PRC3BP}) we have%
\begin{equation}
S(\phi(t,q))=\phi(-t,S(q)). \label{eq:sym-prop}%
\end{equation}
We will say that an orbit $q(t)$ is $S$-symmetric when
\begin{equation}
S(q(t))=q(-t). \label{eq:S-sym}%
\end{equation}

Each Lyapunov orbit is $S$-symmetric. It possesses a two dimensional stable manifold and a two dimensional unstable manifold. These manifolds lie on the same energy level as the orbit and their intersection, when restricted to the three dimensional constant energy manifold (\ref{eq:M-energy}), is transversal. These invariant manifolds are $S$-symmetric with respect to each other, meaning that the stable manifold is an image by $S$ of the unstable manifold (see Figure \ref{fig:plot3d} for the unstable manifold, and Figure \ref{fig:plot2d} for the intersection of manifolds). All these facts are well known and extensively studied numerically. 

Our aim in this paper will be firstly to provide a rigorous-computer-assisted proof of existence of the manifold of Lyapunov orbits over a large radius from $L_2$ (see Figure \ref{fig:LapOrbits}). Secondly, using rigorous-computer-assisted computations, we shall show that for orbits with energies close to the energy of comet Oterma $h=1.\,\allowbreak515$ their associated stable and unstable manifolds intersect transversally. Even though such intersections are well known from numerical investigation, to the best of our knowledge this is a first rigorous proof of their existence. 

\begin{figure}[h]
\begin{center}
\includegraphics[height=3.6in]{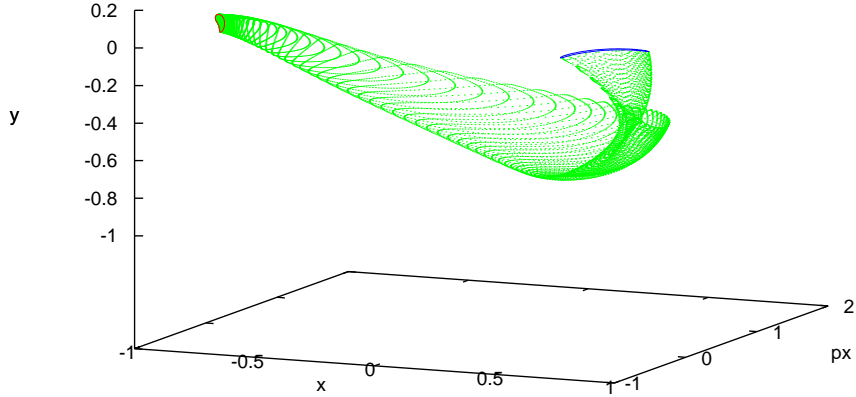}
\end{center}
\caption{The Lyapunov orbit in red, its unstable manifold in green, and the intersection of the unstable manifold with section $\{ y=0\}$ in blue, projected onto $x,y,p_x$ coordinates. The figure is for the energy of comet Oterma $h=1.515$ in the Jupiter-Sun system.}%
\label{fig:plot3d}%
\end{figure}

\begin{figure}[h]
\begin{center}
\includegraphics[height=1.9in]{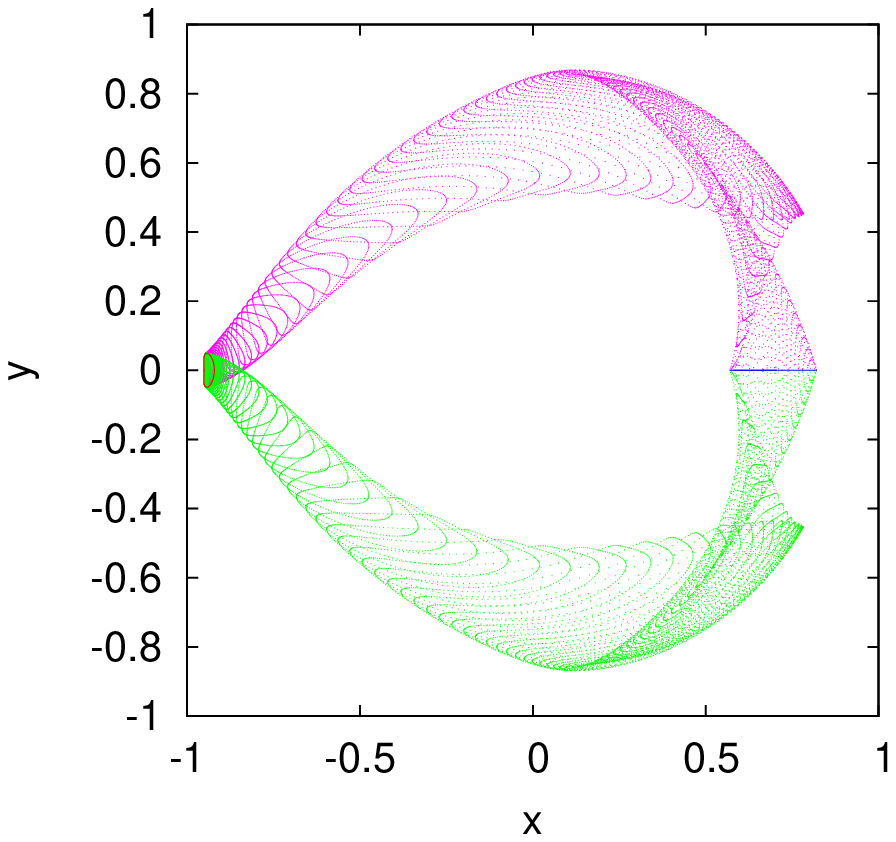}\includegraphics[height=1.9in]{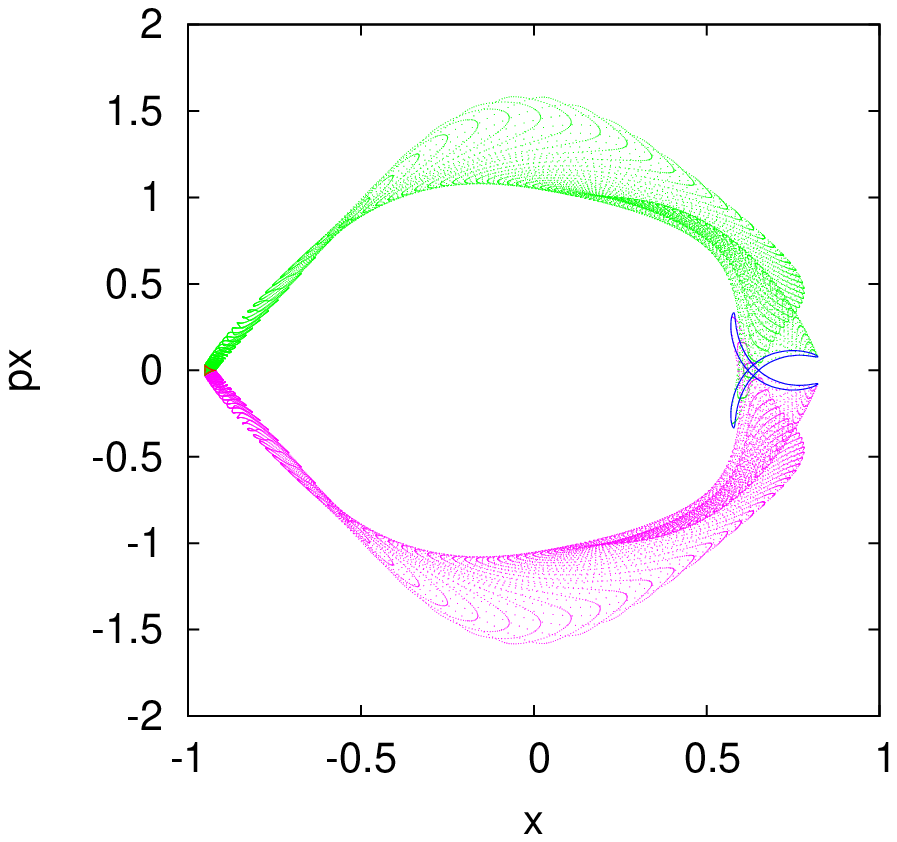}
\end{center}
\caption{The Lyapunov orbit in red, its unstable manifold in green, stable manifold in purple, and their intersections with section $\{ y=0\}$ in blue, projected onto $x,y$ coordinates (left) and $x,p_x$ coordinates (right). The figure is for the energy of comet Oterma $h=1.\,\allowbreak515$ in the Jupiter-Sun system.}%
\label{fig:plot2d}%
\end{figure}

\subsection{Interval Newton Method}

Let $X$ be a subset of $\mathbb{R}^n$. We shall denote by $[X]$ an interval enclosure of the set $X$, that is, a set
\[
[X]=\Pi_{i=1}^{n} [a_i,b_i]\subset \mathbb{R}^n,
\]
such that
\[
X\subset [X].
\]

Let $f:\mathbb{R}^n\to \mathbb{R}^n$ be a $C^1$ function and $U\subset \mathbb{R}^n$. We shall denote by $[Df(U)]$ the interval enclosure of a Jacobian matrix on the set $U$. This means that $[Df(U)]$ is an interval matrix defined as
\[
\lbrack Df(U)]=\left\{  A\in\mathbb{R}^{n\times n}|A_{ij}\in\left[  \inf_{x\in
U}\frac{df_{i}}{dx_{j}}(x),\sup_{x\in U}\frac{df_{i}}{dx_{j}}(x)\right]
\text{ for all }i,j=1,\ldots,n\text{ }\right\}  .
\]

\begin{theorem}
\label{th:interval-Newton}\cite{Al} (Interval Newton method) Let
$f:\mathbb{R}^{n}\rightarrow\mathbb{R}^{n}$ be a $C^{1}$ function and
$X=\Pi_{i=1}^{n}[a_{i},b_{i}]$ with $a_{i}<b_{i}$. If $[Df(X)]$ is invertible
and there exists an $x_{0}$ in $X$ such that%
\[
N(x_{0},X):=x_{0}-\left[  Df(X)\right]  ^{-1}f(x_{0})\subset X,
\]
then there exists a unique point $x^{\ast}\in X$ such that $f(x^{\ast})=0.$
\end{theorem}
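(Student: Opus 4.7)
The plan is to establish uniqueness via a mean value argument using the invertibility hypothesis, and then prove existence by constructing a continuous self-map of $X$ and applying Brouwer's fixed point theorem.

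For uniqueness, I would suppose $x_1, x_2 \in X$ both satisfy $f(x_i) = 0$. Because $X$ is a product of intervals and hence convex, the integral form of the mean value theorem gives
\[
0 = f(x_1) - f(x_2) = \Bigl(\int_0^1 Df\bigl(x_2 + s(x_1 - x_2)\bigr)\,ds\Bigr)(x_1 - x_2) =: M(x_1 - x_2),
\]
where $M \in [Df(X)]$ since the integrand lies in $[Df(X)]$ along the entire segment. The hypothesis that $[Df(X)]$ is invertible means every matrix it contains is nonsingular, so $M$ is invertible and $x_1 = x_2$.

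For existence, define for each $x \in X$ the matrix $M_x := \int_0^1 Df(x_0 + s(x-x_0))\,ds \in [Df(X)]$ and the map
\[
T(x) := x_0 - M_x^{-1} f(x_0).
\]
Then $M_x$ depends continuously on $x$ and is invertible, so $T$ is a continuous map from $X$ to $\mathbb{R}^n$. The key observation is that $T(x)$ has exactly the form $x_0 - A^{-1} f(x_0)$ for some $A \in [Df(X)]$, so $T(x)$ belongs to the set enclosed by $N(x_0,X)$; by hypothesis $N(x_0,X) \subset X$, hence $T$ maps the compact convex set $X$ into itself. Brouwer's fixed point theorem yields $x^* \in X$ with $T(x^*) = x^*$. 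Rewriting this as $M_{x^*}(x^* - x_0) = -f(x_0)$ and recognizing the left-hand side as $f(x^*) - f(x_0)$ (again by the integral mean value formula, which is precisely how $M_{x^*}$ was constructed) gives $f(x^*) = 0$, as required.

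The main subtlety, and the only place where one must be careful, is the precise meaning of $N(x_0,X) = x_0 - [Df(X)]^{-1} f(x_0)$. I would interpret it, as is standard in interval arithmetic, as an outer enclosure of $\{x_0 - A^{-1} f(x_0) : A \in [Df(X)]\}$, so that the hypothesis $N(x_0,X) \subset X$ really does guarantee $T(x) \in X$ for every $x \in X$. With this reading fixed, the rest of the argument is routine. No quantitative contraction estimate is needed: the combination of a continuous selection of invertible matrices $M_x$ with the set-theoretic inclusion $N(x_0,X) \subset X$ carries all the burden.
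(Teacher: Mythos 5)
The paper does not prove this theorem at all: it is imported verbatim from the cited reference of Alefeld and used as a black box, so there is no in-paper proof to compare against. Your argument is correct and is essentially the classical proof of the interval Newton operator's existence--uniqueness property. Both halves are sound: the uniqueness step correctly uses convexity of the box $X$ to place the averaged Jacobian $M=\int_0^1 Df(x_2+s(x_1-x_2))\,ds$ inside $[Df(X)]$ entrywise, and invertibility of every member of $[Df(X)]$ then forces $x_1=x_2$; the existence step correctly observes that $T(x)=x_0-M_x^{-1}f(x_0)$ is a continuous selection with values in $\{x_0-A^{-1}f(x_0):A\in[Df(X)]\}\subset N(x_0,X)\subset X$, so Brouwer applies, and the fixed-point identity $M_{x^*}(x^*-x_0)=-f(x_0)$ combined with $M_{x^*}(x^*-x_0)=f(x^*)-f(x_0)$ yields $f(x^*)=0$. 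Your explicit flag about the interpretation of $N(x_0,X)$ as (an outer enclosure of) the set $\{x_0-A^{-1}f(x_0):A\in[Df(X)]\}$ is exactly the right point to pin down, and it is consistent with how the paper evaluates $N$ in interval arithmetic in Lemmas \ref{lem:Newton-per-orb}, \ref{lem:F-image} and \ref{lem:Newton-preimage}, where any rigorous overestimate of that set only strengthens the hypothesis $N\subset X$. No gaps.
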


\subsection{Notations}
Throughout the paper we shall use a notation $\phi(t,x)$ for the flow, and $\Phi_{T}(x)=\phi(T,x)$ for a time $T$ shift along trajectory map of (\ref{eq:PRC3BP}). For points $p=(x,y)$ we shall write $\pi_{x}p$ and $\pi_{y}p,$ to denote projections onto coordinates $x$ and $y$ respectively. We shall also use the following notation for a cartesian product of sets $\Pi_{i=1}^{n}U_{i}=U_{1}\times\ldots\times U_{n}$. For $A,B\subset\mathbb{R}^{n}$ we shall use a notation $A+B=\{a+b|a\in A,b\in
B\}$.

%\usepackage{pdfsync}
%\usepackage{hyperref}%
%\usepackage{pdfsync}
%\usepackage{hyperref}%
%\usepackage{pdfsync}
%\usepackage{hyperref}%

%TCIDATA{OutputFilter=latex2.dll}
%TCIDATA{Version=5.00.0.2606}
%TCIDATA{LaTeXparent=0,0,online-edit.tex}

\section{Existence of a Family of Lyapunov Orbits\label{sec:Lap-orb-man}}

In this section we shall present a method for proving existence of Lyapunov
orbits far away from $L_{2}$. The result is in the spirit of the method
applied by Wilczak and Zgliczy\'{n}ski in \cite{WZ1,WZ2} for a Lyapunov orbit
with energy $h=1.\,515$ of the comet Oterma. Our result differs from
\cite{WZ1,WZ2} by the fact that we obtain a smooth family of orbits over a
large set, whereas in \cite{WZ1,WZ2} a single orbit was proved.

We shall consider orbits starting from points of the form $(x,0,0,p_{y})$ with
$x$ inside an interval
\begin{align}
I_{x}  &  =[\underline{I_{x}},\overline{I_{x}}]:=\left[  \frac{1}{2}%
(-1+\mu-0.933),-0.933\right] \label{eq:Ix-interval}\\
&  \approx\lbrack-0.96602315,-0.933]\subset\mathbb{R}.\nonumber
\end{align}
Since $\pi_{x}L_{2}\approx-0.93237$ we see that $\underline{I_{x}}<\frac{1}%
{2}(-1+\mu-\pi_{x} L_{2})$, so the interval $I_{x}$ stretches from half the
distance between the smaller primary and $L_{2}$, almost up to $L_{2}$ (see
Figure \ref{fig:LapOrbits}, where the orbits are depicted in green, and
stretch between an inner and outer orbit depicted in blue).

Let us consider a section $\Sigma=\{y=0\}$ and a Poincar\'{e} map
$P:\Sigma\rightarrow\Sigma$ of (\ref{eq:PRC3BP}). We shall interpret the
Poincar\'{e} map as a function from $\mathbb{R}^{3}$ to $\mathbb{R}^{3}$ with
coordinates $x,p_{x},p_{y}$. If for a point $q=(x,0,p_{y})\in\Sigma$ we have
$\pi_{p_{x}}P(q)=0$, then by the symmetry property (\ref{eq:sym-prop}) the
point $q$ lies on a periodic orbit (the Poincar\'{e} map $P$ makes a half turn
along the orbit starting from $q$).

Let us introduce the following notation
\[
f:\mathbb{R}^{2}\rightarrow\mathbb{R},
\]%
\[
f(x,p_{y})=\pi_{p_{x}}P(x,0,p_{y}).
\]
To find a periodic orbit for some fixed $x$ it is sufficient to find a zero of
a function
\[
g_{x}(p_{y}):=f(x,p_{y}).
\]
Let $DP=\left(  dP_{i\,j}\right)  _{i,j=1,2,3}$ be the derivative of the map
$P,$ with indexes $1,2,3$ corresponding to coordinates $x,p_{x},p_{y}$
respectively. \begin{figure}[h]
\begin{center}
\includegraphics[height=1.5in]{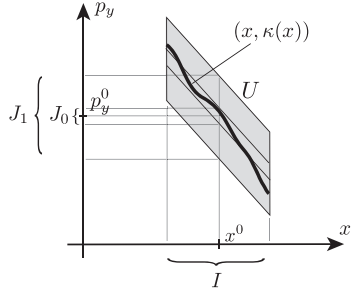}
\end{center}
\caption{The bound for a curve of points $q(x)=(x,0,0,\kappa(x))$ on Lyapunov
orbits.}%
\label{fig:U-bound}%
\end{figure}

\begin{lemma}
\label{lem:Newton-per-orb}Let $I$ and $J_{i}$ for $i=0,1$ be closed intervals
such that $J_{0},J_{1}$ have the same center point $p_{y}^{0}$ and
$J_{0}\subset J_{1}$. Let $x^{0}$ be the center point of $I$. Let
$a\in\mathbb{R}$ and $U_{0},U\subset\Sigma=\mathbb{R}^{3}$ be sets defined as
(see Figure \ref{fig:U-bound})
\begin{align}
U_{0}  &  =\{x^{0}\}\times\{0\}\times J_{0},\nonumber\\
U  &  =\left\{  (x,0,p_{y})|x\in I,p_{y}=a\left(  x-x^{0}\right)  +\iota
,\iota\in J_{1}\right\}  . \label{eq:set-U}%
\end{align}
If
\begin{equation}
N:=p_{y}^{0}-\left[  \frac{\pi_{p_{x}}P(x^{0},0,p_{y}^{0})}{dP(U_{0})_{2\,3}%
}\right]  \subset J_{0}, \label{eq:Newton-Lap}%
\end{equation}
and%
\begin{equation}
\left\vert \alpha-a\right\vert <\frac{1}{|I|}\left(  |J_{1}|-|J_{0}|\right)
\quad\text{for all }\alpha\in\left[  \underline{\alpha},\overline{\alpha
}\right]  :=\left[  -\frac{dP(U)_{2\,1}}{dP(U)_{2\,3}}\right]  ,
\label{eq:slope-assumption}%
\end{equation}
then there exists a smooth function $\kappa:I\rightarrow\mathbb{R}$ such that
for any $x\in I$ a point $q(x)=(x,0,0,\kappa(x))$ lies on an $S$-symmetric
periodic orbit of (\ref{eq:PRC3BP}). Moreover, $\kappa^{\prime}(x)\in\left[
\underline{\alpha},\overline{\alpha}\right]  $ and $q(x)\in U$ for all $x\in
I$.
\end{lemma}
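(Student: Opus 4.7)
The plan is to combine interval Newton at the single central point $x^0$ with an implicit function theorem extension across all of $I$, using the slope hypothesis (\ref{eq:slope-assumption}) as the geometric ingredient that keeps the graph of $\kappa$ inside the parallelogram-shaped set $U$. The final $S$-symmetric periodic orbit claim is then a direct consequence of the reversing symmetry (\ref{eq:sym-prop}).

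First I would apply Theorem \ref{th:interval-Newton} to the one-variable map $g_{x^0}(p_y) = f(x^0, p_y)$ on $J_0$ with starting point $p_y^0$. Since $g_{x^0}'(p_y) = dP_{2,3}(x^0, 0, p_y)$, the enclosure $[g_{x^0}'(J_0)]$ equals $[dP(U_0)_{2,3}]$, so hypothesis (\ref{eq:Newton-Lap}) is exactly the interval Newton condition, producing a unique $\kappa(x^0) \in J_0$ with $f(x^0, \kappa(x^0)) = 0$. The well-definedness of the quotient in (\ref{eq:slope-assumption}) also forces $0 \notin [dP(U)_{2,3}]$, so $\partial f/\partial p_y$ is nonvanishing on all of $U$. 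The implicit function theorem then extends $\kappa$ smoothly on a maximal subinterval $I^* \subset I$ containing $x^0$, with $q(x) = (x, 0, 0, \kappa(x)) \in U$ and $\kappa'(x) = -dP_{2,1}(q(x))/dP_{2,3}(q(x)) \in [\underline{\alpha}, \overline{\alpha}]$.

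The main step, and the step I expect to be the main obstacle, is to show $I^* = I$ without letting the graph leave $U$. The natural idea is to measure deviation from the center line of $U$ by introducing $\tilde\kappa(x) := \kappa(x) - a(x - x^0)$, so that $|\tilde\kappa(x^0) - p_y^0| \leq |J_0|/2$ and $|\tilde\kappa'(x)| = |\kappa'(x) - a|$ is bounded by the strict constant $M := \max_{\alpha \in [\underline\alpha,\overline\alpha]}|\alpha - a| < (|J_1| - |J_0|)/|I|$ supplied by (\ref{eq:slope-assumption}). Integrating over $x \in I^*$, where $|x - x^0| \leq |I|/2$, then gives
\[ |\tilde\kappa(x) - p_y^0| \leq \frac{|J_0|}{2} + \frac{|I|}{2}\cdot M < \frac{|J_0|}{2} + \frac{|J_1| - |J_0|}{2} = \frac{|J_1|}{2}, \]
so $\tilde\kappa(x)$ is trapped strictly inside $J_1$ and $q(x)$ stays in $\mathrm{int}(U)$. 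This strict bound passes to limit points of $I^*$ in $I$, so by a standard open--closed argument the IFT extension never terminates inside $I$ and $I^* = I$. The calibration of the strict inequality in (\ref{eq:slope-assumption}) is exactly what buys this: the accumulated drift $(|J_1|-|J_0|)/2$ plus the initial radius $|J_0|/2$ fits, with room to spare, inside the radius $|J_1|/2$.

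Finally, each $q(x) = (x, 0, 0, \kappa(x))$ is fixed by $S$ since $y=0$ and $p_x=0$, and $f(x, \kappa(x)) = \pi_{p_x} P(q(x)) = 0$ forces $P(q(x))$ to lie in $\mathrm{Fix}(S)$ as well. Applying (\ref{eq:sym-prop}) at these two symmetric points then identifies the forward half-orbit from $q(x)$ to $P(q(x))$ with the time-reverse of its $S$-image, closing the orbit up into an $S$-symmetric periodic orbit.
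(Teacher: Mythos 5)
Your proposal is correct and follows essentially the same route as the paper's proof: interval Newton at $x^0$ on $J_0$, then the implicit function theorem with the slope bound from (\ref{eq:slope-assumption}) to continue $\kappa$ across all of $I$ while staying in $U$, and the reversing symmetry for periodicity. The only difference is that you spell out explicitly the quantitative estimate $|J_0|/2 + M|I|/2 < |J_1|/2$ that traps the graph inside $U$, a step the paper leaves to Figure \ref{fig:U-bound}.
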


\begin{proof}
Existence of a unique point $\kappa(x_{0})\in J_{0}$ for which $g_{x_{0}%
}(\kappa(x_{0}))=0$ follows from (\ref{eq:Newton-Lap}), which implies%
\[
p_{y}^{0}-\left[  Dg_{x_{0}}(J_{0})\right]  ^{-1}g_{x_{0}}(p_{y}^{0})\subset
N\subset J_{0},
\]
combined with interval Newton method (Theorem \ref{th:interval-Newton}).

For (\ref{eq:slope-assumption}) to hold we need to have $0\notin dP(U)_{2\,3}%
$. For $(x,0,p_{y})\in U$ we have $\frac{\partial f}{\partial p_{y}}%
(x,p_{y})\in dP(U)_{2\,3}$ hence $\frac{\partial f}{\partial p_{y}}%
(x,p_{y})\neq0$. This means that we can apply the implicit function theorem to
obtain a curve $\kappa(x)$ for which $f(x,\kappa(x))=0$. We now need to make
sure that the curve $\kappa$ is defined on the entire interval $I.$ At each
point $x$ for which $(x,0,\kappa(x))\in U$ is defined, by the implicit
function theorem we know that%
\[
\kappa^{\prime}(x)=-\frac{\frac{\partial f}{\partial x}(x,\kappa(x))}%
{\frac{\partial f}{\partial p_{y}}(x,\kappa(x))}\in\left[  -\frac
{dP(U)_{2\,1}}{dP(U)_{2\,3}}\right]  .
\]
This, by assumption (\ref{eq:slope-assumption}), means that we can continue
the curve from $\kappa(x^{0})$ to the whole interval $I$ (see Figure
\ref{fig:U-bound}).
\end{proof}

\begin{figure}[h]
\begin{center}
\includegraphics[height=1.8in]{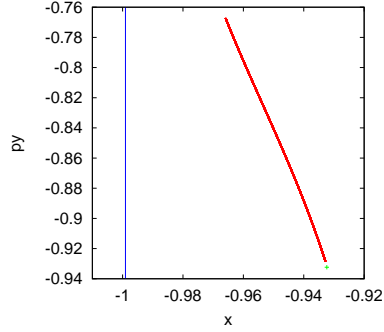}
\end{center}
\caption{Numerical plot of $\kappa(x)$, consisting of $15\,000$ points
$q_{i}^{0}$ on Lyapunov orbits (in red). The point $L_{2}$ is in green. The
blue line $x=-1+\mu$ gives an indication of the position of the smaller
primary along the $x$ coordinate.}%
\label{fig:Lyap-orb-numeric}%
\end{figure}To apply Lemma \ref{lem:Newton-per-orb} we first compute
numerically a sequence of points (see Figure \ref{fig:Lyap-orb-numeric})%
\begin{align*}
q_{i}^{0}  &  =(x_{i}^{0},0,0,p_{y,i}^{0})\quad\text{for}\quad i=0,\ldots
,15\,000,\\
x_{i}^{0}  &  =\underline{I_{x}}+\frac{i}{15000}\left(  \overline{I_{x}%
}-\underline{I_{x}}\right)  ,
\end{align*}
where $\underline{I_{x}}$, $\overline{I_{x}}$ are defined in
(\ref{eq:Ix-interval}). The $q_{i}^{0}$ are non-rigorously, numerically computed 
points on Lyapunov orbits. We then compute  (non-rigorously) a sequence of slopes (see Figure \ref{fig:slope}) 
\[
a_{i}\in\mathbb{R}\quad i=0,\ldots,15\,000,
\]
define%
\[
r=\frac{1}{15\,000}\frac{1}{2}(\overline{I_{x}}-\underline{I_{x}}%
)\approx10^{-6}\cdot1.1007716,
\]%
\begin{align*}
I_{i}  &  =x_{i}^{0}+\left[  -r,r\right]  ,\\
J_{0,i}  &  =p_{y,i}^{0}+10^{-13}\cdot\lbrack-1,1],\\
J_{1,i}  &  =p_{y,i}^{0}+10^{-8}\cdot\left[  -5,5\right]  ,
\end{align*}
and consider sets%
\begin{align*}
U_{0}  &  =\{x_{i}^{0}\}\times\{0\}\times J_{0,i},\\
U_{i}  &  =\left\{  (x,0,p_{y})|x\in I_{i},p_{y}=a_{i}\left(  x-x_{i}%
^{0}\right)  +\iota,\iota\in J_{1,i}\right\} .
\end{align*}

We apply Lemma \ref{lem:Newton-per-orb} repeatedly $15\,000$ times, and obtain
the following theorem.\begin{figure}[h]
\begin{center}
\includegraphics[height=1.8in]{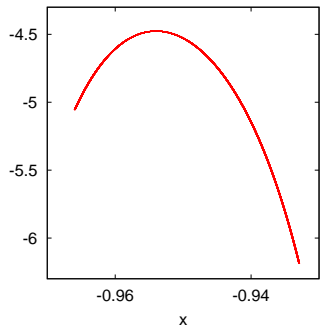}
\end{center}
\caption{Numerical plot of $\kappa^{\prime}(x)$, consisting of $15\,000$
points $a_{i}$}%
\label{fig:slope}%
\end{figure}

\begin{theorem}
[First main result]\label{th:main-1} Let $I_{x}$ be the interval from
(\ref{eq:Ix-interval}). Then there exists a curve $q(x)=(x,0,0,\kappa(x))$ of
points on Lyapunov orbits with $\kappa:I_{x}\rightarrow\mathbb{R}$, which lies
within a $5\cdot10^{-8}$ distance from the piecewise linear curve joining the
$15\,000$ points $q^{0}_{i}$ on Figure \ref{fig:Lyap-orb-numeric}.
\end{theorem}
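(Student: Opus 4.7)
The plan is to apply Lemma \ref{lem:Newton-per-orb} to each of the $15\,000$ subintervals $I_i = x_i^0 + [-r,r]$ in turn, producing on each $I_i$ a smooth local branch $\kappa_i : I_i \to \mathbb{R}$ of periodic-orbit initial conditions, and then to glue these branches into a single function $\kappa : I_x \to \mathbb{R}$.

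For each $i = 0,\ldots,14\,999$ I would use a validated ODE integrator to produce rigorous interval enclosures of the Poincar\'e map $P$ and of its Jacobian $DP$ on the sets $U_{0,i}$ and $U_i$ defined in the text. From these enclosures I would check the Newton condition
\[
N_i := p_{y,i}^0 - \left[ \frac{\pi_{p_x} P(x_i^0,0,p_{y,i}^0)}{dP(U_{0,i})_{2\,3}} \right] \subset J_{0,i},
\]
which is (\ref{eq:Newton-Lap}), and the slope condition (\ref{eq:slope-assumption}), where the tolerance $\frac{1}{|I_i|}(|J_{1,i}| - |J_{0,i}|) = (10^{-7}-10^{-13})/r$ is quite generous because the numerical $a_i$ are chosen to approximate $\kappa'(x_i^0)$. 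Lemma \ref{lem:Newton-per-orb} then yields a smooth $\kappa_i : I_i \to \mathbb{R}$ such that $(x,0,0,\kappa_i(x)) \in U_i$ lies on an $S$-symmetric periodic orbit.

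Next I would glue. Consecutive boxes share the endpoint $x_i^0 + r = x_{i+1}^0 - r$, and the constraint $q(x) \in U_i$ combined with the tight width of $J_{0,i}$ (together with the fact that the Newton uniqueness from the lemma localises the zero of $g_x$ within an interval containing both $\kappa_i(x)$ and $\kappa_{i+1}(x)$, provided the boxes are designed to overlap in the $p_y$-direction) forces $\kappa_i(x) = \kappa_{i+1}(x)$ on the join, so $\kappa(x) := \kappa_i(x)$ for $x \in I_i$ is well defined and smooth on all of $I_x$. The distance bound is then read off the geometry of the $U_i$: for $x \in I_i$ one has $\kappa(x) - p_{y,i}^0 - a_i(x-x_i^0) \in J_{1,i} - p_{y,i}^0 = 5\cdot 10^{-8}[-1,1]$, and combining this with the piecewise linear interpolant of the numerical points $q_i^0$ gives the claimed $5\cdot 10^{-8}$ bound after accounting for the tiny $|J_{0,i}| = 2\cdot 10^{-13}$ slack at the joins.

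The main obstacle is entirely computational: obtaining rigorous enclosures of $P$ and $DP$ on each $U_i$ that are tight enough for both (\ref{eq:Newton-Lap}) and (\ref{eq:slope-assumption}) to hold, in particular with $|J_{0,i}|$ as small as $2\cdot 10^{-13}$. Since the vector field has near-singular behaviour close to the smaller primary and the Lyapunov family is sensitive in the $p_y$ direction, the wrapping effect of Taylor-based integrators has to be controlled carefully. The remedy is to subdivide $I_x$ finely (hence the choice of $15\,000$ boxes), use a high-order validated integrator (such as the \texttt{CAPD} library) with carefully tuned time steps, and evaluate the Jacobian via variational equations rather than finite differences.
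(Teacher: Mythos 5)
Your proposal is correct and follows essentially the same route as the paper: the paper's proof consists precisely of applying Lemma \ref{lem:Newton-per-orb} on each of the $15\,000$ boxes $U_{0,i}$, $U_i$ (with validated enclosures of $P$ and $DP$ computed in CAPD) and reading the $5\cdot10^{-8}$ bound off the half-width of $J_{1,i}$. Your explicit discussion of gluing the local branches at the shared endpoints via uniqueness of the zero of $g_x$ is a detail the paper leaves implicit, but it is consistent with the intended argument.
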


The proof of Theorem \ref{th:main-1} took $5$ hours and $43$ minutes on a standard laptop.

\begin{remark}
Using above described method it is impossible to continue with the orbits to
$L_{2}$. At the fixed point one would need to apply alternative methods, such
as the method of majorants \cite{SM}, Lyapunov theorem by tracing the radius
of convergence of the normal form \cite{Moser}, or
topological-computer-assisted tools such as \cite{CR,CZ2}.
\end{remark}

%\usepackage{pdfsync}
%\usepackage{hyperref}%
%\usepackage{pdfsync}
%\usepackage{hyperref}%

%TCIDATA{OutputFilter=latex2.dll}
%TCIDATA{Version=5.00.0.2606}
%TCIDATA{LaTeXparent=0,0,online-edit.tex}

\section{Outline of Results for Intersections of Invariant Manifolds\label{sec:outline}}

In the reminder of the paper we shall focus our attention on orbits starting
from $q(x)=(x,0,0,\kappa(x))$ with $x\in I$ for
\begin{align}
I  &  = [\underline{I},\overline{I}]:= x^{0}+[-1,1]\cdot10^{-9}%
,\label{eq:x0-I}\\
x^{0}  &  =-0.9510055339445208. \notag
\end{align}
Such orbits have energy close to the energy of the comet Oterma $h=1.515$.

Let us introduce a notation $\Lambda$ for a family of Lyapunov orbits, which
start from $q(x)$ with $x\in I$%
\begin{equation}
\Lambda=\{\phi(t,q(x))|t\in\mathbb{R},q(x)=(x,0,0,\kappa(x)),x\in I\}.
\label{eq:Lambda-def}%
\end{equation}
For $x\in I,$ let $L(x)\subset\Lambda$ denote the Lyapunov orbit which starts
from $q(x).$

Throughout the reminder of the paper we shall prove the following theorem.

\begin{theorem}
[Second main result]\label{th:main} $\Lambda$ is a normally hyperbolic
invariant manifold with a boundary. Each orbit $L(x)\subset\Lambda$ possesses
a two dimensional stable manifold $W^{s}(L(x))$ and a two dimensional unstable
manifold $W^{u}(L(x)).$ The manifolds $W^{s}(L(x))$ and $W^{u}(L(x))$
intersect and the intersection, when restricted to the constant energy
manifold $M(H(L(x)))$, is transversal (see (\ref{eq:M-energy}) for definition
of $M$).
\end{theorem}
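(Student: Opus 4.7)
The plan is to prove the theorem in three stages, corresponding to the structure announced for Sections~\ref{sec:hyp-energy}--\ref{sec:parameterization-fibers}.

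First, I would establish that $\Lambda$ is a normally hyperbolic invariant manifold with boundary and is foliated by energy. Starting from the curve $q(x)=(x,0,0,\kappa(x))$ provided by Theorem~\ref{th:main-1}, I would introduce cone conditions on the Poincar\'e section $\Sigma=\{y=0\}$ tailored to the half-return map $P$: at each $q(x)\in\Lambda\cap\Sigma$ verify, with rigorous interval bounds on $DP$, an invariant splitting into one expanding direction, one contracting direction, and a neutral direction tangent to the curve of fixed points $q(x)$ of $P^{2}$. Because Lemma~\ref{lem:Newton-per-orb} already supplies both $\kappa$ and $\kappa'$ with explicit bounds, the check reduces to a finite collection of interval evaluations along the $15\,000$ boxes constructed in Section~\ref{sec:Lap-orb-man}. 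Foliation by energy then amounts to checking that $H(q(x))$ is strictly monotone on $I$, which follows from a direct interval computation of $\tfrac{d}{dx}H(q(x))$ using $\kappa'(x)\in[\underline{\alpha},\overline{\alpha}]$.

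Second, for each $L(x)$ I would construct a rigorous local unstable fiber $W^{u}_{\mathrm{loc}}(L(x))$. The idea is to apply the topological covering-relations framework for saddle--center fixed points developed in Section~\ref{sec:unstable-topological} (a generalization of \cite{Z1}) to the Poincar\'e map $P$, using the normally hyperbolic splitting from the previous step to isolate the hyperbolic block. Combined with the parameterization method of Section~\ref{sec:parameterization-fibers}, this yields an explicit chart of $W^{u}_{\mathrm{loc}}(L(x))$ together with interval enclosures of its image in phase space. By the reversing symmetry (\ref{eq:sym-prop}), the local stable fibers are obtained as $S$-images of the unstable ones, so no additional work is required for $W^{s}_{\mathrm{loc}}(L(x))$.

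Third, I would rigorously integrate $W^{u}_{\mathrm{loc}}(L(x))$ forward under the flow of~(\ref{eq:PRC3BP}) until its first transversal return to $\Sigma$ on the side encircling the larger primary, producing an interval enclosure of $W^{u}(L(x))\cap\Sigma$. By (\ref{eq:S-sym}) the corresponding trace of $W^{s}(L(x))$ on $\Sigma$ is the $S$-image of this enclosure, so the intersection reduces to the fixed-point set of $S$ on $\Sigma$, namely $\{y=0,\, p_{x}=0\}$. Existence of an intersection point in $W^{u}(L(x))\cap W^{s}(L(x))$ then follows by verifying, with the interval Newton method of Theorem~\ref{th:interval-Newton}, that the propagated unstable branch hits $\{p_{x}=0\}$ for each $x\in I$. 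Transversality inside the $3$-dimensional energy level $M(H(L(x)))$ reduces to showing that the tangent vector to $W^{u}(L(x))\cap\Sigma$ and its $S$-image are linearly independent modulo $\nabla H$; this is a two-dimensional linear algebra check performed on the rigorously propagated tangent bundle, and will also yield the quantitative bound on the intersection angle promised in the abstract.

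The principal obstacle I expect is the wrapping effect during the long integration from a neighborhood of $L(x)$ around the larger primary back to $\Sigma$. To control it I would propagate the parameterized fiber together with the variational equations, rebase the enclosure at each step to align with the dominant expanding direction produced by the parameterization, and exploit conservation of $H$ to project out one coordinate and reduce the effective dimension of the rigorously tracked set. A secondary difficulty is obtaining hyperbolicity and parameterization estimates that are \emph{uniform} in the parameter $x\in I$ rather than at isolated orbits; here the smooth dependence of $\kappa$ on $x$ guaranteed by Lemma~\ref{lem:Newton-per-orb}, together with the very small diameter $|I|=2\cdot 10^{-9}$ of the family considered, makes the required uniformity feasible within a single interval-enclosure computation rather than a continuation argument.
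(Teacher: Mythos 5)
Your plan follows essentially the same route as the paper: normal hyperbolicity and energy foliation from interval bounds on the return map along the curve $q(x)$ (the paper reduces this to a $2\times2$ eigenvalue check by restricting to the energy surface), local unstable fibers via the parameterization method plus cone conditions with stable fibers obtained by $S$-symmetry, and then propagation to $\{y=0\}$ where a sign change of $p_{x}$ gives a symmetric homoclinic point and slope bounds on the propagated cone give transversality. The only deviations are implementation details (the paper works with the time-$\tau(x)$ map rather than the Poincar\'e map for the fibers, uses an intermediate-value argument rather than interval Newton to locate the crossing of $\{p_{x}=0\}$, and needs only the tiny interval $I$ of (\ref{eq:x0-I}) rather than the $15\,000$ boxes of Section~\ref{sec:Lap-orb-man}), none of which changes the substance of the argument.
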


Numerical plots of the intersection of manifolds that we shall prove are given
in Figure \ref{fig:plot2d}.

Theorem \ref{th:main} will be proved with computer assistance. During the
proof we shall obtain rigorous bounds on the region and the angle at which the
manifolds intersect (see Figure \ref{fig:intersection}).

The size of interval $I$ (\ref{eq:x0-I}) is very small. When translated the
real life distance in the Jupiter-Sun system, its length is just slightly over
one and a half kilometer. This is practically a single point. We need to start
with such a small set to obtain our result. Thanks to this we obtain sharp
estimates on the intersection of $W^{s}(L(x))$, $W^{u}(L(x))$. To consider a
larger set of Lyapunov orbits one would need to iterate the procedure a number
of times. This can be done without any difficulty apart from necessary time
for computation. The proof of Theorem \ref{th:main} took $46$ minutes on a
standard laptop. Using clusters one could cover a larger interval $I$ in
reasonable time.

\section{Hyperbolicity of Lyapunov Orbits and Foliation by Energy\label{sec:hyp-energy}}

In this section we shall show that each orbit $L(x)\subset\Lambda$ lies on a
different energy level. We shall also show that each orbit $L(x)$ (when
considered on its constant energy manifold) is hyperbolic. In other words, we
shall show that $\Lambda$ is a normally hyperbolic manifold with a boundary.

We start with a simple remark.

\begin{remark}
\label{rem:energy-foliation}If for all $x\in I$ we have $\frac{d}%
{dx}H(q(x))\neq0,$ then Lyapunov orbits with different $x$ have different
energies. Note that the set $U$ and the bound on the derivative of
$\kappa^{\prime}(x)$ from Lemma \ref{lem:Newton-per-orb} can be used to
obtain
\[
\frac{d}{dx}H(q(x))\in\left[  \frac{\partial H}{\partial x}(U)+\frac{\partial
H}{\partial p_{y}}(U)\kappa^{\prime}(U)\right]  .
\]

\end{remark}

We shall now give a simple lemma which can be used to show that our Lyapunov
orbits are hyperbolic.

In what follows in this section, let $P:\Sigma\rightarrow\Sigma$ be a second
return Poincar\'{e} map for $\Sigma=\{y=0\}.$ This means that each point
$q(x)=(x,0,0,\kappa(x)),$ with $x\in I$, is a fixed point of $P.$ We shall
interpret the Poincar\'{e} map as a function from $\mathbb{R}^{3}$ to
$\mathbb{R}^{3}$ with coordinates $x,p_{x},p_{y}$.

\begin{lemma}
\label{lem:Normal-Hyperbolicity}Let $U$ be the set given by (\ref{eq:set-U})
in Lemma \ref{lem:Newton-per-orb}. Assume that for any $1\times2$ matrix $A$
\begin{equation}
A\in\left[  \left(  -\left(  \frac{\partial H}{\partial x}\right)
^{-1}\left(
\begin{array}
[c]{cc}%
\frac{\partial H}{\partial p_{x}} & \frac{\partial H}{\partial p_{y}}%
\end{array}
\right)  \right)  (U)\right]  \label{eq:A-matrix-def}%
\end{equation}
and any $2\times2$ matrix $B$%
\begin{equation}
B\in\left[  \left(  \left(
\begin{array}
[c]{c}%
dP_{2\,1}\\
dP_{3\,1}%
\end{array}
\right)  A+\left(
\begin{array}
[c]{cc}%
dP_{2\,2} & dP_{2\,3}\\
dP_{3\,2} & dP_{3\,3}%
\end{array}
\right)  \right)  \left(  U\right)  \right]  \label{eq:matrix-B}%
\end{equation}
the spectrum of $B$ consists of two real eigenvalues $\lambda_{1},\lambda_{2}$
satisfying $\left\vert \lambda_{1}\right\vert >1>\left\vert \lambda
_{2}\right\vert .$ Then for any $x\in I$ the Lyapunov orbit starting from
$q(x),$ restricted to the constant energy manifold $M(H(q(x)))$, is a
hyperbolic orbit.
\end{lemma}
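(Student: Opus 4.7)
The plan is to reduce the hyperbolicity of $L(x)$ on its constant energy manifold to hyperbolicity of a fixed point of a planar return map, and then to read the latter directly off the spectral assumption on $B$. Because the flow of (\ref{eq:PRC3BP}) preserves $H$, the second return map $P$ preserves $H$ as well, so for each $x\in I$ the surface $S(x)=\{q\in\Sigma:H(q)=H(q(x))\}$ is a two-dimensional $P$-invariant submanifold of $\Sigma$ containing $q(x)$ as a fixed point. The Lyapunov orbit $L(x)$, viewed in the three-dimensional manifold $M(H(q(x)))$, is hyperbolic if and only if $q(x)$ is a hyperbolic fixed point of $P|_{S(x)}$.

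To describe $P|_{S(x)}$ I would introduce a graph chart for $S(x)$. The hypothesis that (\ref{eq:A-matrix-def}) defines a bounded interval matrix implicitly requires $0\notin [\partial H/\partial x(U)]$, so the implicit function theorem writes $S(x)$ locally as a graph $(p_x,p_y)\mapsto (X(p_x,p_y),p_x,p_y)$ with
\begin{equation*}
\bigl(\partial X/\partial p_x,\ \partial X/\partial p_y\bigr)
= -\left(\frac{\partial H}{\partial x}\right)^{-1}\bigl(\partial H/\partial p_x,\ \partial H/\partial p_y\bigr).
\end{equation*}
Since $q(x)\in U$ by Lemma \ref{lem:Newton-per-orb}, this row vector, evaluated at $q(x)$, is one realization of the interval matrix $[A]$ from (\ref{eq:A-matrix-def}).

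Finally I would conjugate $P|_{S(x)}$ through the chart to obtain the planar map $\tilde P(p_x,p_y) = (\pi_{p_x}P,\pi_{p_y}P)(X(p_x,p_y),p_x,p_y)$, and differentiate by the chain rule:
\begin{equation*}
D\tilde P = \begin{pmatrix} dP_{2\,1}\\ dP_{3\,1} \end{pmatrix}\bigl(\partial X/\partial p_x,\ \partial X/\partial p_y\bigr) + \begin{pmatrix} dP_{2\,2} & dP_{2\,3}\\ dP_{3\,2} & dP_{3\,3} \end{pmatrix}.
\end{equation*}
Evaluated at $q(x)$ this is precisely a realization of a matrix of the form (\ref{eq:matrix-B}); by the hypothesis on $B$, its spectrum is therefore real with $|\lambda_1|>1>|\lambda_2|$, so $q(x)$ is a hyperbolic saddle of $\tilde P$, which yields hyperbolicity of $L(x)$ in $M(H(q(x)))$. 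The only point that needs care is the bookkeeping in interval arithmetic, namely checking that the particular $A$ and $B$ arising at $q(x)$ genuinely lie in the interval enclosures built from $U$; this is immediate from $q(x)\in U$, so no analytical obstacle remains once the reduction to $\tilde P$ is set up.
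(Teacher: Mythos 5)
Your proposal is correct and follows essentially the same route as the paper's proof: both use the nonvanishing of $\partial H/\partial x$ on $U$ (implicit in the definition of $A$) to write the energy surface in $\Sigma$ locally as a graph $x=X(p_x,p_y)$, then compute the derivative of the restricted return map by the chain rule and identify it as a realization of $B$. No gaps.
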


\begin{proof}
Let us fix some $\hat{x}\in I.$ For our assumptions to hold, $A$ from
(\ref{eq:A-matrix-def}) needs to be properly defined. This means that
$\frac{\partial H}{\partial x}(q(\hat{x}))\neq0$. By the implicit function
theorem there exists a function $x(p_{x},p_{y})$ with $x(0,\kappa(\hat
{x}))=\hat{x}$ such that $H(x(p_{x},p_{y}),0,p_{x},p_{y})=H(q(\hat{x}))$ and%
\begin{equation}
\left(
\begin{array}
[c]{cc}%
\frac{\partial x}{\partial p_{x}} & \frac{\partial x}{\partial p_{y}}%
\end{array}
\right)  \left(  0,\kappa(\hat{x})\right)  =-\left(  \frac{1}{\frac{\partial
H}{\partial x}}\left(
\begin{array}
[c]{cc}%
\frac{\partial H}{\partial p_{x}} & \frac{\partial H}{\partial p_{y}}%
\end{array}
\right)  \right)  \left(  0,\kappa(\hat{x})\right)  . \label{eq:implicit-x}%
\end{equation}

The Lyapunov orbit starting from $q(\hat{x})$ is contained in the constant
energy manifold $M(H(q(\hat{x})))$. Let us consider $V=M(H(q(\hat{x}%
)))\cap\{y=0\}$ and a Poincar\'{e} map $\tilde{P}:V\rightarrow V$. In a
neighborhood of $q(\hat{x})$ the manifold $V$ can be parameterized by $\left(
p_{x},p_{y}\right)  .$ Since%
\[
\tilde{P}(p_{x},p_{y})=\pi_{(p_{x},p_{y})}P(x(p_{x},p_{y}),p_{x},p_{y})
\]
we have%
\begin{align}
&  D\tilde{P}\left(  0,\kappa(\hat{x})\right) \label{eq:DP-energy}\\
&  =\left(  \left(  \pi_{(p_{x},p_{y})}\frac{\partial P}{\partial x}\right)
\left(
\begin{array}
[c]{cc}%
\frac{\partial x}{\partial p_{x}} & \frac{\partial x}{\partial p_{y}}%
\end{array}
\right)  +\left(
\begin{array}
[c]{cc}%
dP_{2\,2} & dP_{2\,3}\\
dP_{3\,2} & dP_{3\,3}%
\end{array}
\right)  \right)  \left(  \hat{x},0,\kappa(\hat{x})\right)  .\nonumber
\end{align}
By (\ref{eq:implicit-x}), (\ref{eq:DP-energy}) and our assumption about the
spectrum of $B$ of from (\ref{eq:matrix-B}), follows that $\left(
0,\kappa(\hat{x})\right)  $ is a hyperbolic fixed point for the map $\tilde
{P}.$ This means that the Lyapunov orbit starting from $q(\hat{x})$,
restricted to the constant energy manifold $M(H(q(\hat{x})))$ is hyperbolic.
\end{proof}

\begin{remark}
Since $B$ from (\ref{eq:matrix-B}) is a $2\times2$ matrix, estimation of its
eigenvalues is straightforward. Here we profit from the the reduction of
dimension made by restricting to a constant energy manifold.
\end{remark}

Since we consider a small part of the family of orbits (\ref{eq:x0-I}), we can
obtain a much tighter enclosure of the curve $\kappa(x)$ for $x\in I$ than
from Theorem \ref{th:main-1}. Let
\begin{equation}%
\begin{array}
[c]{lll}%
p_{y}^{0}= & -0.836804179646973\quad & J_{0}=p_{y}^{0}+[-1,1]\cdot10^{-13}\\
a= & -4.506866203376769 & J_{1}=p_{y}^{0}+[-1,1]\cdot10^{-12}%
\end{array}
\label{eq:p0y}%
\end{equation}
and%
\begin{equation}
U=\left\{  (x,0,0,p_{y})|x\in I,p_{y}=a\left(  x-x^{0}\right)  +\iota,\iota\in
J_{1}\right\}  . \label{eq:U-prop-for-I}%
\end{equation}

\begin{proposition}
\label{prop:kappa-bound} For $x\in I$, with $I$ from (\ref{eq:x0-I}), we have
$q(x)=(x,0,0,\kappa(x))\subset U$ and
\begin{align}
\kappa^{\prime}(x)  &  \in[-4.506980818,-4.506751634], \label{eq:dkappa-bound}%
\\
\frac{d}{dx}H(q(x))  &  \in[-0.3670937615,-0.3670674516], \label{eq:dH-bound}%
\end{align}
\begin{align}
H(\underline{I},0,0,a(\underline{I}-x_{0})+J_{1} )  &  \in
[-1.514999999635,-1.514999999631],\nonumber\\
H(\overline{I},0,0,a(\overline{I}-x_{0})+J_{1})  &  \in
[-1.515000000369,-1.515000000365].\nonumber
\end{align}
Moreover, the orbits (when considered on their constant energy manifolds) are
hyperbolic, and we have following bounds for the eigenvalues
\begin{align}
\lambda_{1}  &  \in\left[  1450.24,1481.68\right]  ,\label{eq:lambda-bounds}\\
\lambda_{2}  &  \in10^{-4}\left[  6.74909,6.89541\right]  .\nonumber
\end{align}

\end{proposition}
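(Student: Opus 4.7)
The plan is to verify each claim in sequence by feeding the specific choice (\ref{eq:p0y}) into the machinery already developed in Lemma~\ref{lem:Newton-per-orb}, Remark~\ref{rem:energy-foliation}, and Lemma~\ref{lem:Normal-Hyperbolicity}, and checking the resulting interval-arithmetic conditions with a rigorous integrator for (\ref{eq:PRC3BP}).

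First I would apply Lemma~\ref{lem:Newton-per-orb} once with $x^{0}$, $p_{y}^{0}$, $a$, $J_{0}$, $J_{1}$ from (\ref{eq:p0y}) and $I$ from (\ref{eq:x0-I}). This requires rigorously computing $\pi_{p_{x}}P(x^{0},0,p_{y}^{0})$ and the relevant entries of the interval Jacobian $[DP(U_{0})]$ and $[DP(U)]$; the former is obtained with a $C^{0}$ rigorous integration of (\ref{eq:PRC3BP}) combined with a section-crossing routine, and the latter with a $C^{1}$ integration propagating the variational equation. If the interval Newton inclusion (\ref{eq:Newton-Lap}) holds and the slope consistency (\ref{eq:slope-assumption}) is satisfied, we obtain the smooth curve $\kappa:I\to\mathbb{R}$ with $q(x)=(x,0,0,\kappa(x))\in U$ and the derivative enclosure $\kappa'(x)\in[-dP(U)_{2,1}/dP(U)_{2,3}]$. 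The claimed bound (\ref{eq:dkappa-bound}) is then a direct interval arithmetic readout; since the set $U$ is very thin, the enclosure should be sharp enough to satisfy the stated numerical range.

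Next I would use Remark~\ref{rem:energy-foliation} to get (\ref{eq:dH-bound}): substitute $U$ and the enclosure for $\kappa'(x)$ into the formula
\[
\tfrac{d}{dx}H(q(x))\in\Bigl[\tfrac{\partial H}{\partial x}(U)+\tfrac{\partial H}{\partial p_{y}}(U)\,\kappa'(U)\Bigr],
\]
and evaluate with interval arithmetic using the closed form of $H$. Since this enclosure is strictly negative, $H\circ q$ is strictly decreasing on $I$, so different Lyapunov orbits lie on different energy levels, and the endpoint energy bounds follow from a direct interval evaluation of $H$ at the two boundary line segments $\{(\underline{I},0,0,a(\underline{I}-x_{0})+J_{1})\}$ and $\{(\overline{I},0,0,a(\overline{I}-x_{0})+J_{1})\}$, no integration required.

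Finally, for hyperbolicity and the eigenvalue enclosures (\ref{eq:lambda-bounds}) I would invoke Lemma~\ref{lem:Normal-Hyperbolicity}. I compute the $1\times 2$ interval matrix $A$ from (\ref{eq:A-matrix-def}) using the closed-form partials of $H$ over $U$; since we already know $\partial H/\partial x(U)$ is bounded away from zero from step two, the division is safe. Then, using the $C^{1}$ integrator output on $U$, I form the $2\times 2$ interval matrix $B$ from (\ref{eq:matrix-B}) and enclose its eigenvalues via the characteristic polynomial $\lambda^{2}-\mathrm{tr}(B)\lambda+\det(B)=0$, using the interval quadratic formula. The hard part is making sure the enclosure of the Jacobian of $P$ on the thin but two-dimensional tube $U$ is tight enough that the trace and determinant intervals produce a positive discriminant bounded away from zero (hence real eigenvalues) and separate roots matching the stated intervals; because $U$ has diameter on the order of $10^{-9}$ in $x$ and $10^{-12}$ in $p_{y}$ while $\lambda_{1}\approx 1466$, one must integrate with sufficiently small logarithmic norm control and possibly split $U$ into sub-boxes to avoid wrapping blow-up. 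Once this is verified for the interval $B$, Lemma~\ref{lem:Normal-Hyperbolicity} delivers hyperbolicity of each $L(x)$ on its own energy level, concluding the proposition.
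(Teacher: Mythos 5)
Your proposal follows exactly the paper's route: Lemma~\ref{lem:Newton-per-orb} with the data (\ref{eq:p0y}) for the existence of $q(x)\subset U$ and the $\kappa'$ enclosure, Remark~\ref{rem:energy-foliation} plus direct interval evaluation of $H$ for the energy bounds, and Lemma~\ref{lem:Normal-Hyperbolicity} with the $2\times 2$ interval matrix $B$ for hyperbolicity and the eigenvalue enclosures. The only minor difference is your anticipated need to subdivide $U$; the paper reports that no subdivision was required and the whole verification ran in under two seconds.
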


\begin{proof}
The proof was performed with computer assistance. It required no subdivision
of $U$ and the computation took less than two seconds on a standard laptop.

Existence of $q(x)\subset U$ was shown using Lemma \ref{lem:Newton-per-orb}.
From it also follows the bound (\ref{eq:dkappa-bound}) for $\kappa^{\prime
}(x).$ The bound (\ref{eq:dH-bound}) follows from Remark
\ref{rem:energy-foliation}. Hyperbolicity and bounds (\ref{eq:lambda-bounds})
follow from Lemma \ref{lem:Normal-Hyperbolicity}.
\end{proof}

\section{Cone Conditions and Bounds for Unstable Manifolds of Saddle-Center
Fixed Points\label{sec:unstable-topological}}

In this section we provide a topological tool that can be used for
rigorous-computer-assisted detection of unstable manifolds of saddle-center
fixed points. The method is a modification of \cite{Z1}, where instead of
saddle-center a standard hyperbolic fixed point was considered. The content of
this section is a general result. In section \ref{sec:parameterization-fibers}
we return to the PRC3BP and show how to apply it for the proof of Theorem
\ref{th:main}.

Let $F:\mathbb{R}^{n}\rightarrow\mathbb{R}^{n}$ be a $C^{k}$ diffeomorphism
with a fixed point $v^{\ast}\in\mathbb{R}^{n}$ and $k\geq1.$ Assume that for
eigenvalues $\lambda_{1},\lambda_{2},\ldots,\lambda_{n}$ from the spectrum of
$DF(v^{\ast})$ we have%
\begin{align}
\left\vert \text{re}\lambda_{1}\right\vert  &  >m>1,\label{eq:lambda-m}\\
\left\vert \text{re}\lambda_{i}\right\vert  &  <m\quad\text{for }%
i=2,\ldots,n.\nonumber
\end{align}
Let $W^{u}(v^{\ast})$ denote the unstable manifold of $v^{\ast}$ associated
with the eigenvalue $\lambda_{1}$
\[
W^{u}\left(  v^{\ast}\right)  =\left\{  v|\left\Vert F^{-n}(v)-v^{\ast
}\right\Vert <Cm^{-n}\text{ for all }n\in\mathbb{N}\text{ and some
}C>0\right\}  .
\]
Let $u=1$ and $c=n-1$. The notations $u$ and $c$ will stand for "unstable" and
"central" coordinates of $F$ at $v^{\ast}$. Consider two balls $B_{u}$ and
$B_{c},$ of dimensions $u$ and $c$ respectively, such that $B_{u}\times B_{c}$
is centered at $v^{\ast}$. For a point $v\mathbf{\in}\mathbb{R}^{u}%
\times\mathbb{R}^{c}$ we shall write $v=\left(  \mathsf{x},\mathsf{y}\right)
,$ with $\mathsf{x}\in\mathbb{R}^{u},$ $\mathsf{y}\mathbf{\in}\mathbb{R}^{c}.$
In these notations we shall also write the fixed point as $v^{\ast}=\left(
\mathsf{x}^{\ast},\mathsf{y}^{\ast}\right)  .$

\begin{remark}
We do not need to assume that $(\mathsf{x},0)$ is the eigenvector associated
with $\lambda_{1}$ and that vectors $(0,\mathsf{y})$ span the eigenspace of
$\lambda_{2},\ldots,\lambda_{n}$. For our method to work it is enough if these
vectors are "roughly" aligned with the eigenspaces. This is important for us,
since in any computer assisted computation it is usually not possible to
compute the eigenvectors with full precision.
\end{remark}

Let $\alpha\in\mathbb{R,}$ $\alpha>0$ and consider a function $Q:\mathbb{R}%
^{u}\times\mathbb{R}^{c}\rightarrow\mathbb{R}$%
\[
Q(\mathsf{x},\mathsf{y})=\alpha\mathsf{x}^{2}-\left\Vert \mathsf{y}\right\Vert
^{2}.
\]
For $v_{0}\in\mathbb{R}^{u}\times\mathbb{R}^{c}$ we shall use a notation
$Q^{+}(v_{0})$ for a cone
\[
Q^{+}(v_{0})=\left\{  v|Q(v-v_{0})\geq0\right\}  .
\]
Let us assume that $\alpha$ is chosen sufficiently small so that
$Q^{+}(v^{\ast})\cap B_{u}\times B_{c}$ does not intersect with $B_{u}%
\times\partial B_{c}$ (See Figure \ref{fig:Wu-proof}).

\begin{definition}
We shall say that $h:B_{u}\rightarrow B_{u}\times B_{c}$ is a \emph{horizontal
disc in }$B_{u}\times B_{c}$\emph{ for cones given by }$Q$ if $h(\mathsf{x}%
^{\ast})=v^{\ast},$ $\pi_{\mathsf{x}}h(\mathsf{x})=\mathsf{x}$ and for any
$\mathsf{x}_{1}\neq\mathsf{x}_{2}$ holds $Q\left(  h(\mathsf{x}_{1}%
)-h(\mathsf{x}_{2})\right)  >0.$
\end{definition}

\begin{figure}[h]
\begin{center}
\includegraphics[height=1.1in]{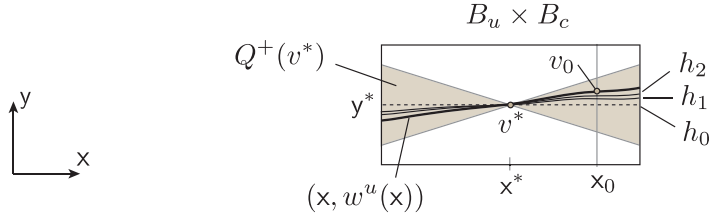}
\end{center}
\caption{Construction of the curve $(\mathsf{x},w^{u}(\mathsf{x}))$ which lies
on the unstable manifold of $v^{\ast}$.}%
\label{fig:Wu-proof}%
\end{figure}

\begin{lemma}
\label{lem:cc-wu-bound}Assume that for any $v_{1},v_{2}\in Q^{+}(v^{\ast}),$
such that $Q(v_{1}-v_{2})\geq0,$ we have%
\begin{equation}
Q(F(v_{1})-F(v_{2}))>0.\label{eq:cc1}%
\end{equation}
Let $m$ be the constant from (\ref{eq:lambda-m}). If for any $v\in B,$ $v\neq
v^{\ast},$ $Q(v-v^{\ast})\geq0$ holds%
\begin{equation}
\left\Vert F(v)-v^{\ast}\right\Vert >m\left\Vert v-v^{\ast}\right\Vert
,\label{eq:cc2}%
\end{equation}
then $W^{u}\left(  v^{\ast}\right)  \subset Q^{+}(v^{\ast}).$ Moreover, there
exists a function $w^{u}:B_{u}\rightarrow B_{c}$ such that $(id,w^{u}%
)(B_{u})=W^{u}(v^{\ast})\cap U,$ and for any $\mathsf{x}_{1},\mathsf{x}_{2}\in
B_{u},$ $\mathsf{x}_{1}\neq\mathsf{x}_{2}$
\begin{equation}
Q((\mathsf{x}_{1},w^{u}\left(  \mathsf{x}_{1}\right)  )-(\mathsf{x}_{2}%
,w^{u}\left(  \mathsf{x}_{2}\right)  ))>0\label{eq:wu-cc}%
\end{equation}
and%
\begin{equation}
\left\Vert \left(  w^{u}\right)  ^{\prime}\left(  \mathsf{x}\right)
\right\Vert \leq\sqrt{\alpha}\quad\text{for all }\mathsf{x}\in B_{u}%
.\label{eq:wu-der}%
\end{equation}

\end{lemma}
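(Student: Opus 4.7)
My plan is to prove this lemma by a graph-transform / cone-condition construction of the unstable manifold, specialized to the one-dimensional unstable case $u = 1$.

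Step 1 (forward invariance of the cone). Applying (\ref{eq:cc1}) with $v_2 = v^*$, which lies in $Q^+(v^*)$ trivially, gives $F(Q^+(v^*) \cap (B_u \times B_c)) \subset \operatorname{int} Q^+(v^*)$. Together with (\ref{eq:cc2}), this says that nontrivial forward orbits starting in $Q^+(v^*) \cap (B_u \times B_c)$ expand away from $v^*$ at rate strictly greater than $m$ and must eventually leave $B_u \times B_c$; equivalently, inside the cone $F^{-1}$ contracts toward $v^*$ at rate at most $1/m$.

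Step 2 (graph transform and fixed point). Let $\mathcal{H}$ be the complete metric space, under the sup norm, of continuous maps $h: B_u \to B_c$ with $h(\mathsf{x}^*) = \mathsf{y}^*$ and $\| h(\mathsf{x}_1) - h(\mathsf{x}_2) \| \le \sqrt{\alpha}\, | \mathsf{x}_1 - \mathsf{x}_2 |$; this Lipschitz condition is exactly the cone condition $Q(\tilde{h}(\mathsf{x}_1) - \tilde{h}(\mathsf{x}_2)) \ge 0$ on the graph $\tilde{h}(\mathsf{x}) := (\mathsf{x}, h(\mathsf{x}))$. For $h \in \mathcal{H}$, (\ref{eq:cc1}) yields that $F \circ \tilde{h}$ still satisfies the cone condition, while (\ref{eq:cc2}) yields that $\pi_{\mathsf{x}} \circ F \circ \tilde{h}$ is strictly monotone with expansion rate at least $m$. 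The assumption that $Q^+(v^*) \cap (B_u \times B_c)$ does not meet $B_u \times \partial B_c$ is precisely what one needs in order to conclude that the image $F(\tilde{h}(B_u))$ covers $B_u$ in the $\mathsf{x}$-direction before exiting through the vertical boundary of the box. Setting
\[
\Gamma(h)(\mathsf{x}) \;:=\; \pi_{\mathsf{y}}(F \circ \tilde{h})\bigl((\pi_{\mathsf{x}} \circ F \circ \tilde{h})^{-1}(\mathsf{x})\bigr),
\]
a direct check, using (\ref{eq:cc1}) to verify the image cone condition and (\ref{eq:cc2}) for the contraction rate, gives $\Gamma: \mathcal{H} \to \mathcal{H}$ and shows $\Gamma$ is a sup-norm contraction. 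Banach's fixed-point theorem then supplies a unique $w^u \in \mathcal{H}$ with $\Gamma(w^u) = w^u$, which is equivalent to $F$-invariance of $\mathrm{graph}(w^u)$. Properties (\ref{eq:wu-cc}) and (\ref{eq:wu-der}) are then immediate from $w^u \in \mathcal{H}$, and the inclusion $\mathrm{graph}(w^u) \subset W^u(v^*)$ follows from Step 1: each point on the graph has all its backward iterates on the graph, hence in the cone, hence its distance to $v^*$ is contracted at rate at most $1/m$, giving the defining estimate $\|F^{-n}(v) - v^*\| < C m^{-n}$.

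Step 3 (reverse inclusion). For $W^u(v^*) \cap (B_u \times B_c) \subset \mathrm{graph}(w^u)$, which then gives $W^u(v^*) \subset Q^+(v^*)$ since $\mathrm{graph}(w^u) \subset Q^+(v^*)$ by the cone condition, I would invoke the standard local uniqueness of the unstable manifold: $W^u(v^*)$ is a one-dimensional immersed invariant manifold tangent at $v^*$ to the $\lambda_1$-eigenvector, and $\mathrm{graph}(w^u)$ is an $F$-invariant one-dimensional submanifold through $v^*$ whose tangent at $v^*$ is forced into $Q^+(v^*)$ and therefore aligned with that same eigenvector, so the two must coincide locally. The main obstacle I expect is the careful verification that $\Gamma$ is well-defined, i.e.\ that $\pi_{\mathsf{x}} \circ F \circ \tilde{h}$ surjects onto $B_u$ and $\Gamma(h)$ takes values in $B_c$; this is precisely where the smallness of $\alpha$ (ensuring $Q^+(v^*) \cap (B_u \times B_c)$ is disjoint from $B_u \times \partial B_c$) is essential.
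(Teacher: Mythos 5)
Your Step 2 contains a genuine gap: the claim that the graph transform $\Gamma$ is a sup-norm contraction on $\mathcal{H}$ does not follow from the stated hypotheses. The distance $\Vert h_{1}-h_{2}\Vert _{\infty }$ between two graphs is measured by differences of the form $(\mathsf{x},h_{1}(\mathsf{x}))-(\mathsf{x},h_{2}(\mathsf{x}))=(0,h_{1}(\mathsf{x})-h_{2}(\mathsf{x}))$, which satisfy $Q<0$ and hence lie \emph{outside} the cone; assumptions (\ref{eq:cc1}) and (\ref{eq:cc2}) say nothing about how $F$ acts on such vertical differences. In the saddle--center setting the central eigenvalues $\lambda _{2},\dots ,\lambda _{n}$ are only required to satisfy $|\mathrm{re}\,\lambda _{i}|<m$ with $m>1$ (in the application they lie on the unit circle), so there is no a priori vertical contraction, and already in the linear model $F(\mathsf{x},\mathsf{y})=(\lambda _{1}\mathsf{x},R\mathsf{y})$ with $R$ an isometry one has $\Vert \Gamma h_{1}-\Gamma h_{2}\Vert _{\infty }=\Vert h_{1}-h_{2}\Vert _{\infty }$ for suitable $h_{1},h_{2}\in \mathcal{H}$. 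No contraction constant $k<1$ can be extracted from (\ref{eq:cc1})--(\ref{eq:cc2}) alone; one would need an additional quantitative hypothesis relating the vertical Lipschitz constant of $F$ to the horizontal expansion. A secondary issue is that a Banach fixed point in the space of Lipschitz graphs yields only a Lipschitz $w^{u}$, whereas (\ref{eq:wu-der}) asserts a bound on $(w^{u})^{\prime }$; differentiability has to be imported separately.

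The paper avoids the contraction mapping entirely. It iterates the single initial disc $h_{0}(\mathsf{x})=(\mathsf{x},\mathsf{y}^{\ast })$, uses (\ref{eq:cc1})--(\ref{eq:cc2}) only to show that each iterate is again a horizontal disc through $v^{\ast }$ contained in $Q^{+}(v^{\ast })$, and then, for each fixed $\mathsf{x}_{0}$, extracts a convergent subsequence $h_{k_{i}}(\mathsf{x}_{0})\to v_{0}$ by compactness of $\mathrm{cl}\,B_{c}$. All backward iterates of $v_{0}$ remain in the cone, so (\ref{eq:cc2}) gives $\Vert F^{-n}(v_{0})-v^{\ast }\Vert <m^{-n}\Vert v_{0}-v^{\ast }\Vert $ and hence $v_{0}\in W^{u}(v^{\ast })$; the $C^{k}$ regularity, the identification with $W^{u}(v^{\ast })\cap U$, and the derivative bound are then obtained from the local unstable manifold theorem near $v^{\ast }$ pushed forward by $F$ using (\ref{eq:cc1}), (\ref{eq:cc2}). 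Your Steps 1 and 3 and the well-definedness discussion of the graph transform are essentially sound and parallel the paper, but the fixed-point mechanism at the core of your Step 2 needs to be replaced by an argument of this compactness type (or the hypotheses strengthened).
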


\begin{proof}
We shall first show that for any $\mathsf{x}_{0}\in B_{u}\backslash
\{\mathsf{x}^{\ast}\}$ there exists a point $v_{0}=(\mathsf{x}_{0}%
,w^{u}(\mathsf{x}_{0}))\in Q^{+}(v^{\ast})$ such that $v_{0}\in W^{u}(v^{\ast
}).$ Let $h_{0}(\mathsf{x})=(\mathsf{x},\mathsf{y}^{\ast})$ be a horizontal
disc (See Figure \ref{fig:Wu-proof}). Observe that $F(h_{0}(\mathsf{x}^{\ast
}))=F(\mathsf{x}^{\ast},\mathsf{y}^{\ast})=v^{\ast}$. By assumptions
(\ref{eq:cc1}), (\ref{eq:cc2}) the curve $F(h_{0}(\mathsf{x}))$ is contained
in $Q^{+}(v^{\ast})$ and $F(h_{0}(\partial B_{u}))\cap B_{u}\times
B_{c}=\emptyset$. Moreover, by assumption (\ref{eq:cc1}) for any
$\mathsf{x}_{1},\mathsf{x}_{2}\in B_{u},$ $\mathsf{x}_{1}\neq\mathsf{x}_{2}$
\[
Q(F(h_{0}(\mathsf{x}_{1}))-F(h_{0}(\mathsf{x}_{2})))>Q(h_{0}(\mathsf{x}%
_{1})-h_{0}(\mathsf{x}_{2}))>0,
\]
which means that $\{F(h_{0}(\mathsf{x}))|\mathsf{x}\in B_{u}\}\cap B_{u}\times
B_{c}$ is a graph of a horizontal disc. Let us denote this disc by $h_{1}$ and
observe that $h_{1}(\mathsf{x}^{\ast})=v^{\ast}.$ In other words, let $h_{1}$
be the graph transform of the disc $h_{0}.$

Taking $F(h_{1}(\mathsf{x}))$ and applying an identical argument, we observe
that
\[
\{F(h_{1}(\mathsf{x}))|\mathsf{x}\in B_{u}\}\cap B_{u}\times B_{c}%
\]
is a graph of a horizontal disc $h_{2}.$ Repeating this procedure we can
construct a sequence of horizontal discs $h_{0},h_{1},h_{2},\ldots$. For a
fixed $\mathsf{x}_{0}$, due to compactness of closure of $B_{c},$ there exists
a subsequence $h_{k_{i}}(\mathsf{x}_{0})$ convergent to some point $v_{0}\in
B_{u}\times\mathrm{cl}B_{c}$. For any $i,n\in\mathbb{N}$ with $k_{i}>n$ the
point $F^{-n}(h_{k_{i}}(\mathsf{x}_{0}))$ lies on the graph of $h_{k_{i}-n}$
and hence is also in $Q^{+}(v^{\ast}).$ This means that for any $n\in
\mathbb{N}$
\[
F^{-n}(v_{0})=\lim_{i\rightarrow\infty}F^{-n}(h_{k_{i}}(\mathsf{x}_{0}))\in
Q^{+}(v^{\ast}).
\]
By assumption (\ref{eq:cc2}) we have%
\[
\left\Vert F^{-n}(v_{0})-v^{\ast}\right\Vert <\frac{1}{m^{n}}\left\Vert
v_{0}-v^{\ast}\right\Vert ,
\]
which means that $v_{0}\in W^{u}\left(  v^{\ast}\right)  $. By construction
$\pi_{\mathsf{x}}v_{0}=\mathsf{x}_{0}$, hence we can define $w^{u}%
(\mathsf{x}_{0}):=\pi_{\mathsf{y}}v_{0}$. 

By the stable/unstable manifold theorem, there exists a small interval
$I_{\varepsilon}=(\mathsf{x}^{\ast}-\varepsilon,\mathsf{x}^{\ast}%
+\varepsilon)$ in which $\{(\mathsf{x},w^{u}(\mathsf{x}))|\mathsf{x}\in
I_{\varepsilon}\}$ is a $C^{k}$ curve which gives full description of
$W^{u}\left(  v^{\ast}\right)  .$ Since $(\mathsf{x},w^{u}(\mathsf{x}))\subset
Q^{+}(v^{\ast})$ we have $(1,(w^{u})^{\prime}(\mathsf{x}^{\ast}))\in
Q^{+}(0).$ Since for sufficiently small $\varepsilon$ the vector $(1,\left(
w^{u}\right)  ^{\prime}(\mathsf{x}))$ is arbitrarily close to $(1,(w^{u}%
)^{\prime}(\mathsf{x}^{\ast})),$ for $\mathsf{x}_{1},\mathsf{x}_{2}\in
I_{\varepsilon}$%
\begin{equation}
Q\left(  (\mathsf{x}_{1},w^{u}(\mathsf{x}_{1}))-(\mathsf{x}_{2},w^{u}%
(\mathsf{x}_{2}))\right)  >0. \label{eq:Q-temp}%
\end{equation}
Iterating the curve $(\mathsf{x},w^{u}(\mathsf{x}))$ through $F$, by
(\ref{eq:cc1}), (\ref{eq:cc2}) we obtain our function $w^{u}:B_{u}\rightarrow
B_{c}$. Note that by our construction for any $\mathsf{x}_{1},\mathsf{x}%
_{2}\in B_{u}$ inequality (\ref{eq:Q-temp}) holds. This implies that for any
$\mathsf{x}_{1},\mathsf{x}_{2}\in B_{u}$%
\[
\frac{\left\Vert w^{u}(\mathsf{x}_{1})-w^{u}(\mathsf{x}_{2})\right\Vert ^{2}%
}{\left\vert \mathsf{x}_{1}-\mathsf{x}_{2}\right\vert ^{2}}<\alpha,
\]
which in turn gives (\ref{eq:wu-der}).
\end{proof}

\begin{remark}
Lemma \ref{lem:cc-wu-bound} can easily be generalized to higher dimension of
$W^{u}(v^{\ast})$. The proof would be identical, taking $Q(\mathsf{x}%
,\mathsf{y})=\alpha\Vert\mathsf{x}\Vert^{2}-\Vert\mathsf{y}\Vert^{2}$. Here we
have set up our discussion so that $W^{u}(v^{\ast})$ is one dimensional simply
because this is what we shall need for our application to the PRC3BP.
\end{remark}

\begin{remark}
By taking the inverse map, Lemma \ref{lem:cc-wu-bound} can be used to prove
existence of stable manifolds.
\end{remark}

To verify assumptions (\ref{eq:cc1}) and (\ref{eq:cc2}) in practice, it is
best to make use of an interval matrix $\mathbf{A}=[DF(Q^{+}(v^{\ast}))].$
Then for any $v_{1},v_{2}\in Q^{+}(v^{\ast})$ we have%
\begin{equation}
F(v_{1})-F(v_{2})=\int_{0}^{1}DF\left(  v_{2}+t\left(  v_{1}-v_{2}\right)
\right)  dt\cdot\left(  v_{1}-v_{2}\right)  \in\mathbf{A}\left(  v_{1}%
-v_{2}\right)  .\label{eq:Fx1x2}%
\end{equation}
This means that
\begin{equation}
F(v)-v^{\ast}\subset\mathbf{A}\left(  v-v^{\ast}\right)
.\label{eq:cc2-interval-matrix}%
\end{equation}
To verify (\ref{eq:cc2}) using (\ref{eq:cc2-interval-matrix}) we can apply
Lemma \ref{lem:cc2-expansion-condition} from the Appendix.

Let us now turn to verification of (\ref{eq:cc1}). Let $C_{Q}$ be a diagonal
matrix such that $v^{T}C_{Q}v=Q(v).$ Equation (\ref{eq:Fx1x2}) gives an
estimate%
\begin{equation}
Q\left(  F(v_{1})-F(v_{2})\right)  \subset\left(  v_{1}-v_{2}\right)
^{T}\mathbf{A}^{T}C_{Q}\mathbf{A}\left(  v_{1}-v_{2}\right)  .
\label{eq:cc1-interval-matrix}%
\end{equation}
To verify (\ref{eq:cc1}) using (\ref{eq:cc1-interval-matrix}) we can apply
Lemma \ref{lem:cc1-matrix} from the Appendix.
%\usepackage{pdfsync}
%\usepackage{hyperref}%
%\usepackage{pdfsync}
%\usepackage{hyperref}%
%\usepackage{pdfsync}
%\usepackage{hyperref}%
%\usepackage{pdfsync}
%\usepackage{hyperref}%
%\usepackage{pdfsync}
%\usepackage{hyperref}%

%TCIDATA{OutputFilter=latex2.dll}
%TCIDATA{Version=5.00.0.2606}
%TCIDATA{LaTeXparent=0,0,online-edit.tex}

\section{Rigorous Bounds for Invariant Manifolds associated with Lyapunov
Orbits\label{sec:parameterization-fibers}}

In this section we give a proof of Theorem \ref{th:main}. In sections
\ref{sec:param} and \ref{sec:fiber-enclosure} we shall show how to apply the
method from section \ref{sec:unstable-topological} to detect fibers of
unstable manifolds of Lyapunov orbits. In section \ref{sec:transversality} we
shall show how to prove that the manifolds intersect. Using these results, in
section \ref{sec:proof-of-main-th} we give a computer assisted proof Theorem
\ref{th:main}.

\subsection{Parameterization Method\label{sec:param}}

The method from section \ref{sec:unstable-topological} requires a good change
of coordinates which "straightens out" the unstable manifold. We shall obtain
such a change of coordinates using a parameterization method. In this
subsection we give an outline of this procedure.

In this section we shall fix some $x\in I$ and show how to find an unstable
fiber of a point
\[
q_{0}=q(x)=\left(  x,0,0,\kappa(x)\right)  \in L(x).
\]
We shall use a notation $\tau=\tau(q_{0})$ for the return time along the
trajectory. The point $q_{0}$ is a saddle center fixed point for a $\tau$-time
map $\Phi_{\tau}:\mathbb{R}^{4}\rightarrow\mathbb{R}^{4}.$

Let $C$ denote a matrix which brings $D\Phi_{\tau}\left(  q_{0}\right)  $ to
real Jordan form. By $\tilde{\Phi}_{\tau}:\mathbb{R}^{4}\rightarrow
\mathbb{R}^{4}$ we shall denote the time $\tau$ map in the linearized local
coordinates%
\[
\tilde{\Phi}_{\tau}\left(  v\right)  :=C^{-1}\left(  \Phi_{\tau}\left(
q_{0}+Cv\right)  -q_{0}\right)  .
\]

Let $W^{u}(\tilde{\Phi}_{\tau},0)$ denote the unstable manifold of
$\tilde{\Phi}_{\tau}$ at zero. If we can find a function%
\[
K=\left(  K_{0},K_{1},K_{2},K_{3}\right)  :\mathbb{R\rightarrow R}^{4},
\]
which for all $\mathsf{x}$ in an interval $I_{0}=[\underline{\mathsf{x}%
},\overline{\mathsf{x}}],$ $\underline{\mathsf{x}}<0<\overline{\mathsf{x}},$
is a solution of a cohomology equation
\begin{equation}
\tilde{\Phi}_{\tau}\left(  K(\mathsf{x})\right)  =K(\lambda\mathsf{x}),
\label{eq:cohomology-equation}%
\end{equation}
then $K(\mathsf{x})\subset W^{u}(\tilde{\Phi}_{\tau},0)$ for $\mathsf{x}\in
I_{0}.$

Once $K$ is established we can consider a nonlinear change of coordinates%
\[
\psi=\left(  \psi_{0},\psi_{1},\psi_{2},\psi_{3}\right)  :\mathbb{R}%
^{4}\rightarrow\mathbb{R}^{4}%
\]
defined as%
\begin{align}
\psi_{0}\left(  \mathsf{x},\mathsf{y}_{1},\mathsf{y}_{2},\mathsf{y}%
_{3}\right)   &  =K_{0}(\mathsf{x})-\left(  \mathsf{y}_{1}K_{1}^{\prime
}(\mathsf{x})+\mathsf{y}_{2}K_{2}^{\prime}(\mathsf{x})+\mathsf{y}_{3}%
K_{3}^{\prime}(\mathsf{x})\right)  ,\label{eq:psi-form}\\
\psi_{i}\left(  \mathsf{x},\mathsf{y}_{1},\mathsf{y}_{2},\mathsf{y}%
_{3}\right)   &  =K_{i}(\mathsf{x})+\mathsf{y}_{i}K_{0}^{\prime}%
(\mathsf{x})\quad\text{for }i=1,2,3.\nonumber
\end{align}
Note that $\psi(\mathsf{x},0)=K(\mathsf{x})$ gives points on the unstable
manifold of the fixed point for the map $\tilde{\Phi}_{\tau}.$ The intuitive
idea behind (\ref{eq:psi-form}) is to orthogonalize coordinates around
$K(\mathsf{x})$ (see Figure \ref{fig:psi}).

Let us define a local map%
\[
F=\psi^{-1}\circ\tilde{\Phi}_{\tau}\circ\psi.
\]
Such map will play the role of $F$ from section \ref{sec:unstable-topological}%
. Observe that%
\[
\{C\psi\left(  K(\mathsf{x})\right)  +q_{0}|\mathsf{x}\in I_{0}\}\subset
C\psi\left(  W^{u}\left(  F,0\right)  \right)  +q_{0}=W^{u}\left(  \Phi_{\tau
},q_{0}\right)  \subset W^{u}(L(x_{0})).
\]

\begin{figure}[h]
\begin{center}
\includegraphics[height=2.6cm]{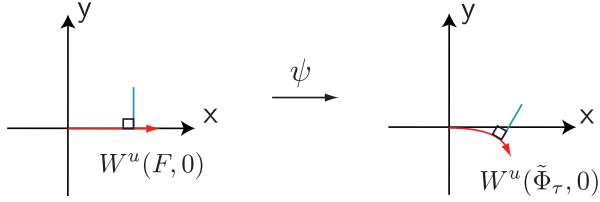}
\end{center}
\caption{The nonlinear change of coordinates $\psi.$}%
\label{fig:psi}%
\end{figure}

\subsection{Bounds for Unstable Fibers through Parameterization and Cone
conditions\label{sec:fiber-enclosure}}

The map $\psi$ (\ref{eq:psi-form}) gives us a change of coordinates which
locally "straightens out" the unstable manifold. The problem with applying the
procedure from section \ref{sec:param} in practice lies in the fact that
usually finding an analytic formula for $K$ satisfying
(\ref{eq:cohomology-equation}) is impossible. The best that can be done is to
find a $K$ which is a polynomial approximation of a solution of
(\ref{eq:cohomology-equation}). This can be done by expanding $\tilde{\Phi
}_{\tau}$ into a Taylor series and inductively comparing the coefficients in
(\ref{eq:cohomology-equation}) (for a detailed description of this method we
refer the reader to \cite{CFL}; see in particular Section 4 and Theorem 4.1).
If we find such an approximate solution of (\ref{eq:cohomology-equation}),
then the set $\{(\mathsf{x},0)|\mathsf{x}\in I_{0}\}$ is no longer the
unstable manifold for $F$ (defined by (\ref{eq:F-loc})), but its
approximation. Even though then our description of the unstable fiber is not
entirely accurate, we can apply the method from Section
\ref{sec:unstable-topological} to obtain a rigorous enclosure of $W^{u}\left(
F,0\right)  $. This enclosure can then be transported to the original coordinates.

In this Section we shall assume that $q_{0}$ is an arbitrary point close to
$q(x)=(x,0,0,\kappa(x))$ for $x\in I,$ $C$ is some given matrix and
$K:\mathbb{R\rightarrow R}^{4}$ is some given polynomial and that $\psi$ is
defined by (\ref{eq:psi-form}).

\begin{remark}
\label{rem:Coord-choice1} Let us stress that the point $q_{0}$ is a numerical
approximation of $q(x),$ the matrix $C$ will be a (non-rigorous) numerically
obtained estimate for the change into Jordan form of the map $\Phi_{\tau}$. We
do not assume that this change is rigorously computed. This is practically
impossible due to the fact that we do not have an analytic formula for
$D\Phi_{\tau}\left(  q_{0}\right)  $. For us the matrix $C$ is simply some
approximation of the matrix which takes $D\Phi_{\tau}\left(  q_{0}\right)  $
into Jordan form. Let us note that it is not difficult to find an interval
matrix $\mathbf{C}^{-1}$ such that the inverse matrix of our $C$ is contained
in $\mathbf{C}^{-1}$.
\end{remark}

\begin{remark}
\label{rem:Coord-choice2} In our setting the polynomial $K$ is an
approximation of the solution of (\ref{eq:cohomology-equation}). In practice
we cannot obtain a fully rigorous solution of (\ref{eq:cohomology-equation}).
It is important to emphasize that we also do not have an inverse of $\psi$. It
is also not simple to find good rigorous estimates for the function $\psi
^{-1}$ due to the fact that $K$ is a high order polynomial. We shall therefore
set up all our subsequent computations so that we will never need to use the
inverse function of $\psi$.
\end{remark}

For $x\in I,$ let $\tau(x)$ be the period of an orbit $L(x)\subset\Lambda$. We
define a map%
\begin{equation}
F=\psi^{-1}\circ\tilde{\Phi}_{\tau(x)}\circ\psi. \label{eq:F-loc}%
\end{equation}
Note that for each $x\in I$ we have a different map $F$. We omit this in our
notation for simplicity, and also because below methods for obtaining rigorous
bounds for $F$ and its derivative shall work for all $x\in I$.

We shall first be interested in computing rigorous bounds for $F(U)$. It turns
out that (\ref{eq:F-loc}) is impossible to apply since we do not have a
formula for $\psi^{-1}$. Even if we did, direct application of (\ref{eq:F-loc}%
) in interval arithmetic would provide very bad estimates due to strong
hyperbolicity of the map. We use a more subtle method.

We shall first need the following notations. Let $\boldsymbol{T}$ denote an
interval such that $\tau(x)\in\boldsymbol{T}$ for all $x\in I$. Let
$\lambda\in\mathbb{R}$ be some number close to an unstable eigenvalue of
$D\Phi_{\tau(x)}(q(x))$ for some $x\in I$. We shall slightly abuse notations
and also consider $\lambda:\mathbb{R}^{4}\rightarrow\mathbb{R}^{4}$ as a
function defined on $v=(\mathsf{x},\mathsf{y})\in\mathbb{R}\times
\mathbb{R}^{3}$ as%
\[
\lambda(\mathsf{x},\mathsf{y}):=\left(  \lambda\mathsf{x},\mathsf{y}\right)
.
\]

The following Lemma allows us to obtain rigorous bounds on pre-images of $F$
from (\ref{eq:F-loc}).

\begin{lemma}
\label{lem:F-image}Let $U_{1}\subset\mathbb{R}^{4}$ be a given set. Let
$G:\mathbb{R}\times\mathbb{R}^{4}\times\mathbb{R}^{4}\rightarrow\mathbb{R}%
^{4}$ be defined as%
\begin{equation}
G(\tau,v_{1},v_{2})=\Phi_{\tau}\left(  C\psi\left(  v_{1}\right)
+q_{0}\right)  -\left(  C\psi(\lambda(v_{2}))+q_{0}\right)  .
\label{eq:G-2dim}%
\end{equation}
Let $U_{2}\subset\mathbb{R}^{4}$ be a set and $\mathbf{A}\left(  U_{2}\right)
$ be an interval matrix defined as%
\[
\mathbf{A}\left(  U_{2}\right)  =-\left[  CD\psi\left(  \lambda\left(
U_{2}\right)  \right)  D\lambda\right]  .
\]
If%
\begin{equation}
N(\mathbf{T},v_{0},U_{1},U_{2}):=v_{0}-\left[  \mathbf{A}\left(  U_{2}\right)
\right]  ^{-1}\left[  G(\mathbf{T},U_{1},v_{0})\right]  \subset U_{2},
\label{eq:Floc-Newton-assumption}%
\end{equation}
then
\begin{equation}
F(U_{1})\subset\lambda\left(  U_{2}\right)  . \label{eq:F-image}%
\end{equation}

\end{lemma}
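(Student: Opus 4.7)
The plan is to recognize the statement as a parametric zero-finding problem and apply the interval Newton method (Theorem \ref{th:interval-Newton}). The inclusion $F(U_1) \subset \lambda(U_2)$ means exactly that for every $v_1 \in U_1$ there exists some $v_2 \in U_2$ with $F(v_1) = \lambda(v_2)$. Unfolding $F = \psi^{-1} \circ \tilde{\Phi}_{\tau(x)} \circ \psi$ together with the definition of $\tilde{\Phi}_\tau$, this equation is equivalent to
\[
C\psi(\lambda(v_2)) + q_0 = \Phi_{\tau(x)}\bigl(q_0 + C\psi(v_1)\bigr),
\]
i.e. $G(\tau(x), v_1, v_2) = 0$. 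The appeal of this reformulation is that it never requires us to compute $\psi^{-1}$ explicitly, which addresses the concern raised in Remarks \ref{rem:Coord-choice1} and \ref{rem:Coord-choice2}.

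The next step is to fix an arbitrary $v_1 \in U_1$, set $\tau = \tau(x) \in \mathbf{T}$, and view $g(v_2) := G(\tau, v_1, v_2)$ as a map from $U_2 \subset \mathbb{R}^4$ to $\mathbb{R}^4$. The chain rule gives the partial derivative
\[
\frac{\partial G}{\partial v_2}(\tau, v_1, v_2) = -C\, D\psi(\lambda(v_2))\, D\lambda,
\]
so the interval enclosure $[Dg(U_2)]$ is contained in $\mathbf{A}(U_2)$. Likewise $g(v_0)$ lies in the interval enclosure $[G(\mathbf{T}, U_1, v_0)]$. Consequently the interval Newton operator for $g$ at $v_0 \in U_2$ is enclosed by $N(\mathbf{T}, v_0, U_1, U_2)$, and the hypothesis $N(\mathbf{T}, v_0, U_1, U_2) \subset U_2$ furnishes exactly the assumption of Theorem \ref{th:interval-Newton}. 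It follows that there is a unique $v_2^* \in U_2$ with $g(v_2^*) = 0$, hence $F(v_1) = \lambda(v_2^*) \in \lambda(U_2)$. Since $v_1$ was arbitrary, $F(U_1) \subset \lambda(U_2)$.

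There is no deep obstacle here; the content of the lemma is essentially that the right-hand side $G$ has been set up so that interval Newton delivers the desired pre-image. The only real bookkeeping is to verify that $\mathbf{A}(U_2)$ truly contains the Jacobian $\partial_{v_2} G$ over the relevant range (a one-line chain-rule calculation) and that using the interval $\mathbf{T}$ for the period and the full set $U_1$ for $v_1$ inside $G$ merely enlarges the enclosure of $g(v_0)$, so the Newton containment is only harder to satisfy, not easier. Implicit in the statement is that $v_0 \in U_2$, which is required by Theorem \ref{th:interval-Newton}; this is the one hypothesis the user must verify separately when the lemma is invoked.
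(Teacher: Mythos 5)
Your proposal is correct and follows essentially the same route as the paper's proof: fix $\tau\in\mathbf{T}$ and $v_{1}\in U_{1}$, observe that $\left[D_{v_2}G_{\tau,v_{1}}(U_{2})\right]\subset\mathbf{A}(U_{2})$ and $G_{\tau,v_1}(v_0)\in\left[G(\mathbf{T},U_{1},v_{0})\right]$, and invoke the interval Newton method to produce the zero $v_{2}\in U_{2}$, which yields $F(v_{1})=\lambda(v_{2})\in\lambda(U_{2})$. Your added remarks (the explicit chain-rule computation of $\partial G/\partial v_{2}$ and the observation that $v_{0}\in U_{2}$ is an implicit hypothesis) are accurate refinements of the same argument.
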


\begin{proof}
The proof is given in the Appendix in section \ref{sec:Local-map-F(U)}. See
also Remark \ref{rem:preimage} for comments on practical application of the lemma.
\end{proof}

\begin{remark}
The choice of the function $G$ is motivated by the following diagram.%
\[%
\begin{array}
[c]{ccc}%
\mathbb{R}^{4} & \overset{\Phi_{\tau}}{\longrightarrow} & \mathbb{R}^{4}\\
\quad\quad\uparrow C+q_{0} &  & \quad\quad\uparrow C+q_{0}\\
\mathbb{R}^{4} & \overset{\tilde{\Phi}_{\tau}}{\longrightarrow} &
\mathbb{R}^{4}\\
\uparrow\psi &  & \uparrow\psi\\
\mathbb{R}^{4} & \left(  \overset{\lambda}{\longrightarrow}\right)  &
\mathbb{R}^{4}%
\end{array}
\]
The diagram is not fully commutative, hence the bracket for $\lambda$.
Intuitively, for $v=(\mathsf{x},0)\in\mathbb{R}\times\mathbb{R}^{3}$ the
diagram should "almost commute". Even though this statement is nowhere close
to rigorous, it might make the method and proof of Lemma \ref{lem:F-image}
more intuitive.
\end{remark}

We now turn to the computation of rigorous bounds for the derivatives of
(\ref{eq:F-loc}). For any $(\mathsf{x},\mathsf{y})$ contained in a set
$B\subset\mathbb{R}^{4}$ we have the following estimates
\begin{align}
DF(\mathsf{x},\mathsf{y})  &  =\left(  D\psi\left(  F(\mathsf{x}%
,\mathsf{y})\right)  \right)  ^{-1}C^{-1}D\Phi_{\tau(x)}\left(  C\psi
(\mathsf{x},\mathsf{y})+q_{0}\right)  CD\psi(\mathsf{x},\mathsf{y}%
)\label{eq:DFloc-form}\\
&  \subset\left[  \left(  D\psi\left(  F(B)\right)  \right)  ^{-1}\right]
\cdot\left[  C^{-1}\right]  \cdot\left[  D\Phi_{\mathbf{T}}\left(
C\psi(B)+q_{0}\right)  \right]  \cdot C\cdot\left[  D\psi\left(  B\right)
\right] \nonumber\\
&  =:\left[  DF(B)\right]  .\nonumber
\end{align}

Note that to compute $\left[  DF(B)\right]  $ from (\ref{eq:DFloc-form}) we do
not need to use $\psi^{-1}.$

\begin{remark}
Using Lemma \ref{lem:F-image} and (\ref{eq:DFloc-form}) we can in practice compute rigorous
bounds for $[F(B)]$ and $[DF(B)].$ We perform such computations in Section
\ref{sec:Rig-fibers} with the use of CAPD library (http://capd.ii.uj.edu.pl/).
The library allows for computation of rigorous estimates for $\Phi
_{\mathbf{T}}$ and its derivative and for rigorous-enclosure operations on
maps and interval matrixes.
\end{remark}

Proposition \ref{eq:U-prop-for-I} gives a bound on a set $U$
(\ref{eq:U-prop-for-I}) which contains all fixed points $q(x),$ with $x\in I$,
of the map $\Phi_{\tau(x)}$. This set can be transported to local coordinates
$(\mathsf{x},\mathsf{y})$. Let $B_{0}\subset\mathbb{R}^{4}$ be such set that%
\[
\{\psi^{-1}(C^{-1}(q(x)-q_{0}))\}|x\in I\}\subset B_{0}.
\]
Such set can easily be computed using for example a technical Lemma
\ref{lem:Newton-preimage} from the Appendix.

Taking a four dimensional set (see Figure \ref{fig:Loc-bound})
\[
B=\bigcup_{v\in B_{0}}Q^{+}(v)\subset
\mathbb{R}^{4}
\] 
using (\ref{eq:DFloc-form})
and Lemmas \ref{lem:cc2-expansion-condition}, \ref{lem:cc1-matrix} to verify
assumptions of Lemma \ref{lem:cc-wu-bound}, we can obtain a bound for the
unstable fibers of all $q(x)$ for $x\in I.$ The obtained bound is computed in
local coordinates $(\mathsf{x},\mathsf{y}),$ but can easily be transported
back to the original coordinates $(x,y,p_{x},p_{y})$ of the system. Detailed
results of such computation will be presented in section \ref{sec:Rig-fibers}.

\begin{figure}[h]
\begin{center}
\includegraphics[height=0.8in]{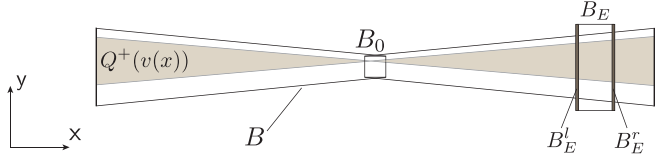}
\end{center}
\caption{Local bound on the unstable manifold. Each fixed point $\psi^{-1}(C^{-1}(q(x)-q_0)),$ for
$x\in I$, lies in $B_{0}$ and its unstable manifold is contained in
$B=\bigcup_{v\in B_{0}}Q^{+}(v).$}%
\label{fig:Loc-bound}%
\end{figure}

\begin{remark}
Let us emphasize that to apply the method it is enough to use a single point
$q_{0},$ single matrix $C$ and single nonlinear change $\psi$. It is not
necessary to use different changes to local coordinates for different $x\in I$.
\end{remark}

\begin{remark}
Let us note that from the fact that $W^{s}(L(x))$ is $S$-symmetric to
$W^{u}(L(x)),$ without any effort we also obtain mirror bounds for fibers of
$W^{s}(L(x))$ .
\end{remark}

\subsection{Transversal Intersections of Manifolds\label{sec:transversality}}

In this section we discuss how the bounds for fibers of $q(x)$ discussed in
section \ref{sec:fiber-enclosure} can be used to prove transversal
intersections of manifolds $W^{u}(L(x))$ and $W^{s}(L(x))$ for $L(x)\subset
\Lambda$ (see (\ref{eq:Lambda-def}) for definition of $\Lambda$).

Let $\mathsf{x}^{l},\mathsf{x}^{r}\in\mathbb{R}$ be such that $\mathsf{x}%
^{l}<\mathsf{x}^{r}$ and $\pi_{\mathsf{x}}B_{0}<\mathsf{x}^{l},\mathsf{x}%
^{r}.$ Let $B_{c}\subset\mathbb{R}^{3}$ be such that $\pi_{\mathsf{y}}B\subset
B_{c}$. Let $B_{E},B_{E}^{l},B_{E}^{r}$ be defined as (see Figure
\ref{fig:Loc-bound})%
\begin{align*}
B_{E}  &  =[\mathsf{x}^{l},\mathsf{x}^{r}]\times B_{c},\\
B_{E}^{l}  &  =\left\{  \mathsf{x}^{l}\right\}  \times B_{c},\\
B_{E}^{r}  &  =\left\{  \mathsf{x}^{r}\right\}  \times B_{c},
\end{align*}
and let%
\begin{align*}
V^{+}  &  =\{(\mathsf{x},\mathsf{y}_{1},\mathsf{y}_{2},\mathsf{y}_{3}%
)\in\mathbb{R}^{4}|\mathsf{x}=1,\mathsf{y}_{i}\in\lbrack-\sqrt{\alpha}%
,\sqrt{\alpha}]\text{ for }i=1,2,3\},\\
V^{-}  &  =\{(\mathsf{x},\mathsf{y}_{1},\mathsf{y}_{2},\mathsf{y}_{3}%
)\in\mathbb{R}^{4}|\mathsf{x}=-1,\mathsf{y}_{i}\in\lbrack-\sqrt{\alpha}%
,\sqrt{\alpha}]\text{ for }i=1,2,3\},\\
V  &  =\{\gamma v|v\in V^{+},\gamma\geq0\}\cup\{\gamma v|v\in V^{-},\gamma
\geq0\}.
\end{align*}
Note that%
\[
Q^{+}(0)\subset V.
\]

Consider a section
\[
\Sigma=\{y=0\}\cap\{x>0\}\cap\{p_{x}^{2}<2\left(  H(L(x))+\Omega(x,y)\right)
\}.
\]
This shall be a section where we detect the intersection of $W^{u}(L(x))$ and
$W^{s}(L(x))$ (see Figures \ref{fig:plot3d}, \ref{fig:plot2d}). Let $\phi$ be
the flow of (\ref{eq:PRC3BP}) and define%
\begin{gather*}
\tau(q)=\inf\{t>0:\phi(t,q)\in\Sigma\},\\
\mathcal{G}:B_{E}\rightarrow\Sigma,\\
\mathcal{G}\left(  \mathsf{x},\mathsf{y}\right)  =\phi(\tau(C\psi
(\mathsf{x},\mathsf{y})+q^{0}),C\psi(\mathsf{x},\mathsf{y})+q^{0}).
\end{gather*}

\begin{lemma}
\label{lem:transversality}Assume that for $F$ defined in (\ref{eq:F-loc})
assumptions of Lemma \ref{lem:cc-wu-bound} hold. If also
\begin{equation}
\pi_{p_{x}}\mathcal{G}\left(  B_{E}^{l}\right)  <0,\qquad\pi_{p_{x}%
}\mathcal{G}\left(  B_{E}^{r}\right)  >0, \label{eq:exit-set-ass}%
\end{equation}
then for any $x\in I$ (with $I$ defined in (\ref{eq:x0-I})) the manifolds
$W^{u}(L(x))$ and $W^{s}(L(x))$ intersect.

Moreover, if for any $v^{+}\in V^{+}$ and $v^{-}\in V^{-}$%
\begin{align}
\pi_{x}\left[  D\mathcal{G}\left(  B_{E}\right)  \right]  v^{+}  &
>0,\qquad\pi_{p_{x}}\left[  D\mathcal{G}\left(  B_{E}\right)  \right]
v^{+}>0,\label{eq:slope-cond1}\\
\pi_{x}\left[  D\mathcal{G}\left(  B_{E}\right)  \right]  v^{-}  &
<0,\qquad\pi_{p_{x}}\left[  D\mathcal{G}\left(  B_{E}\right)  \right]
v^{-}<0,\nonumber
\end{align}
then for each fixed $x\in I$ the intersection is transversal on the constant
energy manifold $M(H(L(x)))$ (see (\ref{eq:M-energy}) for definition of $M$).
\end{lemma}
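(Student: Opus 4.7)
The argument splits into three parts: describing the local unstable fiber inside $B_{E}$, invoking the intermediate value theorem to locate an $S$-symmetric homoclinic point, and then reading off transversality from the $S$-symmetry together with the slope bound (\ref{eq:slope-cond1}).

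First, fix $x\in I$, let $v_{(x)}:=\psi^{-1}(C^{-1}(q(x)-q_{0}))\in B_{0}$ be the fixed point of $F$ in local coordinates, and apply Lemma \ref{lem:cc-wu-bound} to $F$ at $v_{(x)}$. This produces a graph $\gamma_{x}(\mathsf{x})=(\mathsf{x},w^{u}(\mathsf{x}))$ of the local unstable fiber with $\|(w^{u})'(\mathsf{x})\|\le\sqrt{\alpha}$. Because $\pi_{\mathsf{x}}B_{0}<\mathsf{x}^{l}<\mathsf{x}^{r}$, the restriction of $\gamma_{x}$ to $\mathsf{x}\in[\mathsf{x}^{l},\mathsf{x}^{r}]$ lies inside $B_{E}$, meets $B_{E}^{l}$ at the single point $\gamma_{x}(\mathsf{x}^{l})$ and $B_{E}^{r}$ at $\gamma_{x}(\mathsf{x}^{r})$. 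I then set $\eta_{x}(\mathsf{x}):=\mathcal{G}(\gamma_{x}(\mathsf{x}))$; this is a continuous curve in $\Sigma$ whose image, back in the original coordinates, is contained in $W^{u}(L(x))$, since $C\psi(\gamma_{x}(\mathsf{x}))+q_{0}$ lies on the local unstable fiber of $q(x)$ and flowing forward preserves membership in $W^{u}(L(x))$.

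For the existence statement, (\ref{eq:exit-set-ass}) gives $\pi_{p_{x}}\eta_{x}(\mathsf{x}^{l})<0<\pi_{p_{x}}\eta_{x}(\mathsf{x}^{r})$, so the intermediate value theorem yields $\mathsf{x}^{\ast}\in[\mathsf{x}^{l},\mathsf{x}^{r}]$ with $q^{\ast}:=\eta_{x}(\mathsf{x}^{\ast})\in\Sigma$ satisfying $\pi_{y}q^{\ast}=0$ and $\pi_{p_{x}}q^{\ast}=0$. Consequently $S(q^{\ast})=q^{\ast}$, and since $W^{s}(L(x))=S(W^{u}(L(x)))$ follows from the reversing symmetry (\ref{eq:sym-prop}), I conclude $q^{\ast}\in W^{u}(L(x))\cap W^{s}(L(x))$.

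For transversality, the tangent of $\gamma_{x}$ at $\mathsf{x}^{\ast}$ is $(1,(w^{u})'(\mathsf{x}^{\ast}))\in V^{+}$ by (\ref{eq:wu-der}); hence $w^{+}:=D\mathcal{G}(\gamma_{x}(\mathsf{x}^{\ast}))(1,(w^{u})'(\mathsf{x}^{\ast}))$ spans $T_{q^{\ast}}(W^{u}(L(x))\cap\Sigma)$, and (\ref{eq:slope-cond1}) forces $\pi_{x}w^{+}>0$ and $\pi_{p_{x}}w^{+}>0$. Because $\Sigma$ is $S$-invariant and $W^{s}(L(x))\cap\Sigma=S(W^{u}(L(x))\cap\Sigma)$, the tangent $T_{q^{\ast}}(W^{s}(L(x))\cap\Sigma)$ is spanned by $DS(q^{\ast})w^{+}$, which merely flips the sign of the $p_{x}$-component. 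On the two-dimensional slice $\Sigma\cap M(H(L(x)))$, charted by $(x,p_{x})$ (with $p_{y}$ determined by the energy equation at $y=0$), these two spanning vectors become $(a,b)$ and $(a,-b)$ with $a>0$, $b>0$, whose determinant $-2ab$ is nonzero; the intersection of $W^{u}(L(x))$ and $W^{s}(L(x))$ is therefore transversal inside $M(H(L(x)))$. The one delicate point is converting the one-sided slope bound on $V^{+}$ into a statement about both tangent directions; this is handled by transporting through $S$ rather than recomputing the stable fiber, and the symmetric bound on $V^{-}$ in (\ref{eq:slope-cond1}) is available for exactly the parallel argument if one preferred to bypass the symmetry.
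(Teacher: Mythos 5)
Your proof is correct and follows essentially the same route as the paper: graph the unstable fiber via Lemma \ref{lem:cc-wu-bound}, push it to the section with $\mathcal{G}$, locate the homoclinic point by the intermediate value theorem combined with the reversing symmetry, and read off transversality from (\ref{eq:slope-cond1}) together with $S$-symmetry. The only (harmless) differences are that you use the derivative bound (\ref{eq:wu-der}) and the chain rule where the paper applies the mean value theorem to finite increments on both sides of $\mathsf{x}^{\ast}$ (which is why it states both the $V^{+}$ and $V^{-}$ conditions), and that you leave implicit the paper's remark that transversality of the two curves in $\Sigma\cap M(H(L(x)))$ upgrades to transversality of the two-dimensional manifolds in $M(H(L(x)))$ because each of them crosses $\{y=0\}$ transversally ($\mathcal{G}$ being well defined).
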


\begin{proof}
Let us fix an $x\in I.$ First let us observe that because energy
(\ref{eq:H-PRC3BP}) is preserved, the manifold $M(L(x))\cap\Sigma$ can be
parameterized by $x,p_{x}$ since%
\begin{equation}
p_{y}=p_{y}\left(  x,p_{x}\right)  =\sqrt{2(H(L(x))+\Omega(x,y))-p_{x}^{2}}+x
\label{eq:py-well-def}%
\end{equation}
is well defined.

By Lemma \ref{lem:cc-wu-bound} we know that in local coordinates$\mathsf{\ x}%
,\mathsf{y}$ the unstable fiber of $q(x)$ is a horizontal disc in $B$. This
disc is a graph of a function $w^{u}:B_{u}\rightarrow B_{c}$ and for any
$\mathsf{x}_{1},\mathsf{x}_{2}\in B_{u}$ such that $\mathsf{x}_{1}%
\neq\mathsf{x}_{2}$%
\[
\left(  \mathsf{x}_{1},w^{u}(\mathsf{x}_{1})\right)  -\left(  \mathsf{x}%
_{2},w^{u}(\mathsf{x}_{2})\right)  \in Q^{+}(0)\subset V.
\]
The disc also passes through the set $B_{E}$ (see Figure \ref{fig:Loc-bound}).

In the statement of our lemma we implicitly assume that $\mathcal{G}\left(
\mathsf{x},\mathsf{y}\right)  $ is well defined for $\left(  \mathsf{x}%
,\mathsf{y}\right)  \in B_{E}.$ This means that
\begin{equation}
W^{u}(L(x))\cap\Sigma\cap\mathcal{G}\left(  B_{E}\right)  =\{\mathcal{G}%
\left(  \mathsf{x},w^{u}(\mathsf{x})\right)  |\mathsf{x}\in\lbrack
\mathsf{x}^{l},\mathsf{x}^{r}]\}. \label{eq:temp-wu-on-sec}%
\end{equation}

Let us introduce a notation%
\begin{align*}
w_{\Sigma}^{u}  &  :[\mathsf{x}^{l},\mathsf{x}^{r}]\rightarrow\mathbb{R}%
^{2},\\
w_{\Sigma}^{u}(\mathsf{x})  &  =\pi_{x,p_{x}}\mathcal{G}\left(  \mathsf{x}%
,w^{u}(\mathsf{x})\right)  .
\end{align*}
By (\ref{eq:py-well-def}) and (\ref{eq:temp-wu-on-sec}) the curve $w_{\Sigma
}^{u}\left(  \mathsf{x}\right)  $ parametrizes a fragment of the intersection
of $W^{u}(L(x))$ with $\Sigma$. By assumption (\ref{eq:exit-set-ass})
\begin{align*}
\pi_{p_{x}}w_{\Sigma}^{u}\left(  \mathsf{x}^{l}\right)   &  =\pi_{p_{x}%
}\mathcal{G}\left(  \mathsf{x}^{l},w^{u}(\mathsf{x}^{l})\right)  \in\pi
_{p_{x}}\mathcal{G}\left(  \left\{  \mathsf{x}^{l}\right\}  \times
B_{c}\right)  =\pi_{p_{x}}\mathcal{G}\left(  B_{E}^{l}\right)  <0,\\
\pi_{p_{x}}w_{\Sigma}^{u}\left(  \mathsf{x}^{r}\right)   &  =\pi_{p_{x}%
}\mathcal{G}\left(  \mathsf{x}^{r},w^{u}(\mathsf{x}^{r})\right)  \in\pi
_{p_{x}}\mathcal{G}\left(  \left\{  \mathsf{x}^{r}\right\}  \times
B_{c}\right)  =\pi_{p_{x}}\mathcal{G}\left(  B_{E}^{r}\right)  >0,
\end{align*}
hence we have an $\mathsf{x}^{\ast}\in\left(  \mathsf{x}^{l},\mathsf{x}%
^{r}\right)  $ such that%
\[
\pi_{p_{x}}w_{\Sigma}^{u}\left(  \mathsf{x}^{\ast}\right)  =0.
\]

The unstable manifold $W^{s}(L(x))$ is $S$-symmetric to $W^{u}(L(x)).$ This
means that a fragment of intersection of $W^{s}(L(x))$ with $\Sigma$ is
parameterized by%
\begin{align}
w_{\Sigma}^{s}  &  :[\mathsf{x}^{l},\mathsf{x}^{r}]\rightarrow\mathbb{R}%
^{2},\nonumber\\
w_{\Sigma}^{s}(\mathsf{x})  &  =\left(  \pi_{x}w_{\Sigma}^{u}\left(
\mathsf{x}\right)  ,-\pi_{p_{x}}w_{\Sigma}^{u}\left(  \mathsf{x}\right)
\right)  . \label{eq:ws-sig-def}%
\end{align}
Since $w_{\Sigma}^{u}\left(  \mathsf{x}^{\ast}\right)  =w_{\Sigma}^{s}\left(
\mathsf{x}^{\ast}\right)  $ manifolds $W^{u}(L(x))$ and $W^{s}(L(x))$
intersect at 
\[
q^{\ast}=\mathcal{G}\left(  \mathsf{x}^{\ast},w^{u}(\mathsf{x}^{\ast})\right).
\]

Now we turn to proving transversality of the intersection at $q^{\ast}$. By
(\ref{eq:py-well-def}), around $q^{\ast}$ the manifold $M(H(L(x)))$ is
parameterized by $x,y,p_{x}.$ Therefore in the proof of transversality we
restrict to these coordinates. Since $\mathcal{G}$ is well defined,
$W^{u}(L(x))$ must transversally cross $\{y=0\}.$ By symmetry so does
$W^{s}(L(x))$. We therefore only need to prove that $w_{\Sigma}^{u}\left(
\mathsf{x}\right)  $ and $w_{\Sigma}^{s}\left(  \mathsf{x}\right)  $ intersect
transversally in $\mathbb{R}^{2}.$

Let $\mathsf{x}^{+}\in(\mathsf{x}^{\ast},\mathsf{x}^{r}],$ $\gamma=1/\left(
\mathsf{x}^{+}-\mathsf{x}^{\ast}\right)  $ and
\[
v=\gamma\left(  \left(  \mathsf{x}^{+},w^{u}(\mathsf{x}^{+})\right)  -\left(
\mathsf{x}^{\ast},w^{u}(\mathsf{x}^{\ast})\right)  \right)  \in V^{+}.
\]
By the mean value theorem%
\[
w_{\Sigma}^{u}\left(  \mathsf{x}^{+}\right)  -w_{\Sigma}^{u}\left(
\mathsf{x}^{\ast}\right)  \in\pi_{x,p_{x}}\frac{1}{\gamma}\left[
D\mathcal{G}(B_{E})\right]  v.
\]
By (\ref{eq:slope-cond1}) this implies that%
\begin{equation}
\pi_{x}(w_{\Sigma}^{u}\left(  \mathsf{x}^{+}\right)  -w_{\Sigma}^{u}\left(
\mathsf{x}^{\ast}\right)  )>0,\qquad\pi_{p_{x}}(w_{\Sigma}^{u}\left(
\mathsf{x}^{+}\right)  -w_{\Sigma}^{u}\left(  \mathsf{x}^{\ast}\right)  )>0.
\label{eq:trans1}%
\end{equation}
By mirror arguments, for $\mathsf{x}^{-}\in\lbrack\mathsf{x}^{l}%
,\mathsf{x}^{\ast})$
\begin{equation}
\pi_{x}(w_{\Sigma}^{u}\left(  \mathsf{x}^{-}\right)  -w_{\Sigma}^{u}\left(
\mathsf{x}^{\ast}\right)  )<0,\qquad\pi_{p_{x}}(w_{\Sigma}^{u}\left(
\mathsf{x}^{-}\right)  -w_{\Sigma}^{u}\left(  \mathsf{x}^{\ast}\right)  )<0.
\label{eq:trans2}%
\end{equation}

From (\ref{eq:trans1}), (\ref{eq:trans2}) and (\ref{eq:ws-sig-def}) we see
that $w_{\Sigma}^{u}\left(  \mathsf{x}\right)  $ and $w_{\Sigma}^{s}\left(
\mathsf{x}\right)  $ intersect transversally at $w_{\Sigma}^{u}\left(
\mathsf{x}^{\ast}\right)  =w_{\Sigma}^{s}\left(  \mathsf{x}^{\ast}\right)  ,$
which concludes our proof.
\end{proof}

\begin{remark}
\label{rem:slope}From proof of Lemma \ref{lem:transversality} follows that we
have the following estimate on the slope of the curves $w_{\Sigma}^{u}\left(
\mathsf{x}\right)  $%
\[
\mathbf{a}=\left[  \frac{\pi_{p_{x}}D\mathcal{G}(B_{E})V^{+}}{\pi
_{x}D\mathcal{G}(B_{E})V^{+}}\right]  \cup\left[  \frac{\pi_{p_{x}%
}D\mathcal{G}(B_{E})V^{-}}{\pi_{x}D\mathcal{G}(B_{E})V^{-}}\right]  .
\]

By $S$-symmetry of $W^{u}(L(x))$ and $W^{s}(L(x))$ the slope of $w_{\Sigma
}^{s}\left(  \mathsf{x}\right)  $ is in $-\mathbf{a.}$

Once we verify (\ref{eq:exit-set-ass}) then by checking that $\mathbf{a}>0$ we
know that assumption (\ref{eq:slope-cond1}) needs to hold.
\end{remark}

\subsection{Proof of Theorem \ref{th:main}\label{sec:proof-of-main-th}}

In this section we write the computer assisted rigorous bounds, which we obtain using the method from sections \ref{sec:fiber-enclosure},
\ref{sec:transversality}. As a result we obtain rigorous bounds for the
position of fibers of $W^{u}(L(x))$ and for transversal intersection of
$W^{u}(L(x))$ with $W^{s}(L(x)).$ By this we obtain the proof of Theorem
\ref{th:main}.

\subsubsection{Bounds for Unstable Fibers\label{sec:Rig-fibers}}

We start by writing out our changes of coordinates needed for application of
Lemma \ref{sec:transversality} to the map (\ref{eq:F-loc}) from section
\ref{sec:fiber-enclosure}.

We first choose the point $q_{0}=(x^{0},0,0,p_{y}^{0})$ with $x^{0},p_{y}^{0}$
given in (\ref{eq:x0-I}) and (\ref{eq:p0y}) respectively, i.e.%
\begin{align*}
x^{0}  &  =-0.9510055339445208,\\
p_{y}^{0}  &  =-0.8368041796469730.
\end{align*}
We choose a matrix $C$ as%
\[
C=\left(
\begin{array}
[c]{llll}%
0.197841 & -0.197841 & 0 & 0.221884\\
-0.221884 & -0.221884 & 0.773671 & 0\\
1 & 1 & -1 & 0\\
-0.255717 & 0.255717 & 0 & -1
\end{array}
\right)
\]
We then choose four polynomials
\begin{align*}
K_{0}(\mathsf{x})=  &  0.1\mathsf{x}-0.0621591\mathsf{x}^{2}%
+0.0375888\mathsf{x}^{3}-0.0200645\mathsf{x}^{4}\\
K_{1}(\mathsf{x})=  &  0.000533561\mathsf{x}^{2}-0.00723085\mathsf{x}%
^{3}+0.00827176\mathsf{x}^{4}\\
K_{2}(\mathsf{x})=  &  -0.0151949\mathsf{x}^{2}+0.009304476\mathsf{x}%
^{3}-0.00427633\mathsf{x}^{4}\\
K_{3}(\mathsf{x})=  &  0.0269670\mathsf{x}^{2}-0.0275820{x}^{3}%
+0.0203022\mathsf{x}^{4}%
\end{align*}
which define the nonlinear change of coordinates $\psi$ (see
(\ref{eq:psi-form})). All of the above choices are dictated by (non-rigorous)
numerical investigation. Above choice ensures that $C\psi(\mathsf{x},0)+q_{0}$
gives a decent approximation of the position of the unstable fibers of $q(x)$
for $x\in I$ for $I$ given in (\ref{eq:x0-I}).

Now our computations start. We first compute the interval enclosure
$\mathbf{T}$ such that $\tau(q(x))\in\mathbf{T}$ for all $x\in I$. The
obtained result is
\[
\mathbf{T}=\mathtt{[3.058882598,3.058883224]}.
\]
\qquad\qquad

Next we compute a set $B_{0}$ such that (see Figure \ref{fig:Loc-bound})
\[
\psi^{-1}(C^{-1}(q(x)-q^{0}))\subset B_{0}.
\]
Such set can be obtained using a technical Lemma \ref{lem:Newton-preimage}
included in the Appendix. We thus obtain%
\[
B_{0}=\left(
\begin{array}
[c]{c}%
\mathtt{[-7.91575e-12,7.91575e-12]}\\
\mathtt{[-7.91575e-12,7.91575e-12]}\\
\mathtt{[-9.29424e-19,9.29424e-19]}\\
\mathtt{[-4.50827e-08,4.50827e-08]}%
\end{array}
\right)  .
\]

\begin{remark}
Note that the set is flat along the third and stretched along the last
coordinate. This is because we set up $C$ and $\psi$ so that the third
coordinate is associated with the section $\{y=0\}$ (on which lie $q(x)$) and
that the last coordinate is associated with the direction of the curve
$q(x)=(x,0,0,\kappa(x))$.
\end{remark}

We now choose the size of our investigated set $B$ in local coordinates and
choose the parameters for our cones (see Figure \ref{fig:Loc-bound}). We take
\[
\alpha=2.56\cdot10^{-6},
\]
and consider only one branch of the unstable manifold considering
\begin{equation}
B=\bigcup_{v\in B_{0}}Q^{+}(v)\cap\{\mathsf{x}\in\lbrack\underline{\mathsf{x}%
},\overline{\mathsf{x}}]\} \label{eq:B-set-num}%
\end{equation}
with
\[
\underline{\mathsf{x}}=-1\cdot10^{-11},\qquad\overline{\mathsf{x}}%
=4.5\cdot10^{-6}.
\]

The choice of $\overline{\mathsf{x}}$ is dictated by the size of the fiber we
later need to consider to prove intersections of stable/unstable manifolds.

To compute a rigorous enclosure of $[DF(B)]$ using (\ref{eq:DFloc-form}), we
subdivide $B$ into $N=1200$ parts $B_{i}$ along the $\mathsf{x}$ coordinate
\[
B=\bigcup_{i=1}^{N}B_{i}.
\]
Using Lemma \ref{lem:F-image} to obtain enclosures of $F(B_{i}),$ combined
with (\ref{eq:DFloc-form}) we compute estimates for $[DF(B_{i})].$ Combining
the estimates $[DF(B_{i})]$ we obtain the following global estimate for
$[DF(B)]$ (the result is displayed with very rough accuracy, ensuring true
enclosure in rounding)%
\begin{multline*}
\lbrack DF(B)]=\\
\left(
\begin{array}
[c]{llll}%
\mathtt{[1465.6,1466.5]} & \mathtt{[-0.353,0.369]} & \mathtt{[-0.285,0.283]} &
\mathtt{[-0.300,0.333]}\\
\mathtt{[-0.361,0.360]} & \mathtt{[-0.360,0.361]} & \mathtt{[-0.290,0.277]} &
\mathtt{[-0.319,0.304]}\\
\mathtt{[-0.138,0.140]} & \mathtt{[-0.139,0.139]} & \mathtt{[0.896,1.120]} &
\mathtt{[0.458,0.700]}\\
\mathtt{[-0.201,0.202]} & \mathtt{[-0.202,0.202]} & \mathtt{[-0.171,0.149]} &
\mathtt{[0.823,1.172]}%
\end{array}
\right)  .
\end{multline*}

Finally, using $[DF(B)]$ and Lemmas \ref{lem:cc2-expansion-condition},
\ref{lem:cc1-matrix} we verify assumptions of Lemma \ref{lem:cc-wu-bound}. We
thus obtain rigorous bounds for the position of the fibers. The computation of
the enclosure of the fibers took $18$ minutes on a standard laptop.

We plot the obtained bounds on fibers transported to the original coordinates of the system $x,y,p_x,p_y$ in Figures \ref{fig:fibers-x-y},
\ref{fig:fibers-x-px}, \ref{fig:fibers-x-py}. On the plots we present rigorous
enclosures of three fibers starting from $q(x)$ with $x$ on the edges and the
middle of interval $I$ (with $I$ chosen in (\ref{eq:x0-I})). This gives us an
overview of the size of our fiber enclosures (left hand side of Figures
\ref{fig:fibers-x-y}, \ref{fig:fibers-x-px}, \ref{fig:fibers-x-py}). We can
see that close to the set which contains $\{q(x)=(x,0,0,\kappa(x))|x\in I\}$,
which is depicted in green, the estimates on the fibers is sharp (right hand
plots in Figures \ref{fig:fibers-x-y}, \ref{fig:fibers-x-px},
\ref{fig:fibers-x-py}). We can see that our three considered fiber enclosures
are very close to each other, but are still separated, which is visible after
closeup on the left hand side plot in Figure \ref{fig:fibers-x-py}.

\begin{figure}[h]
\begin{center}
\includegraphics[height=4.1cm]{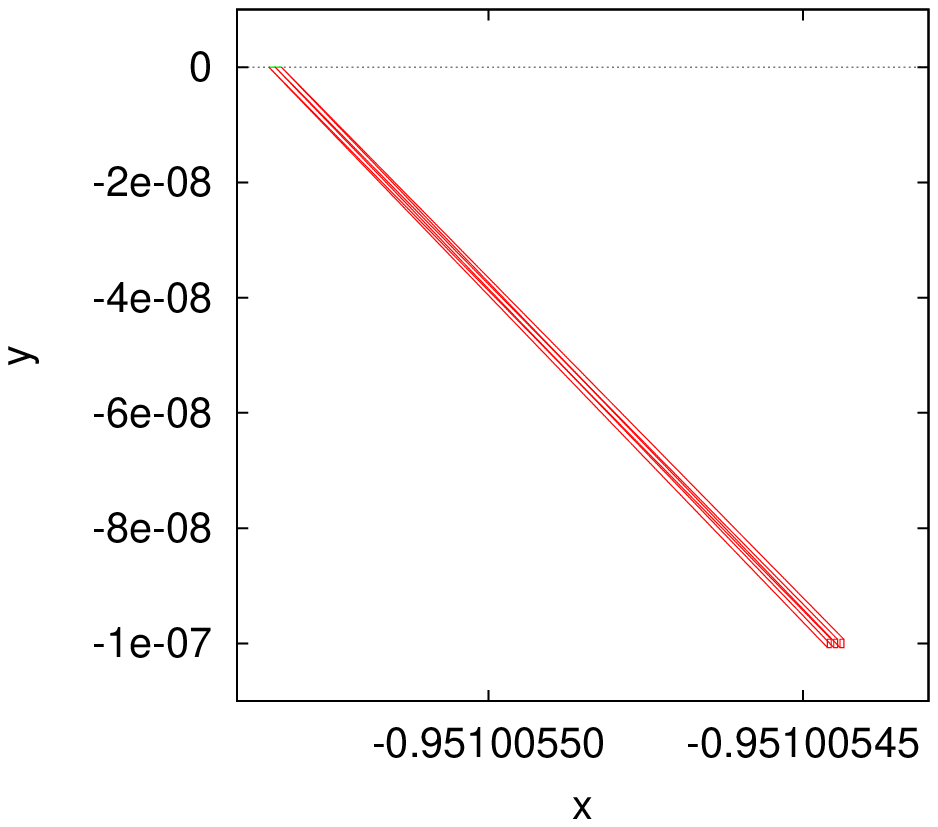}
\includegraphics[height=4.1cm]{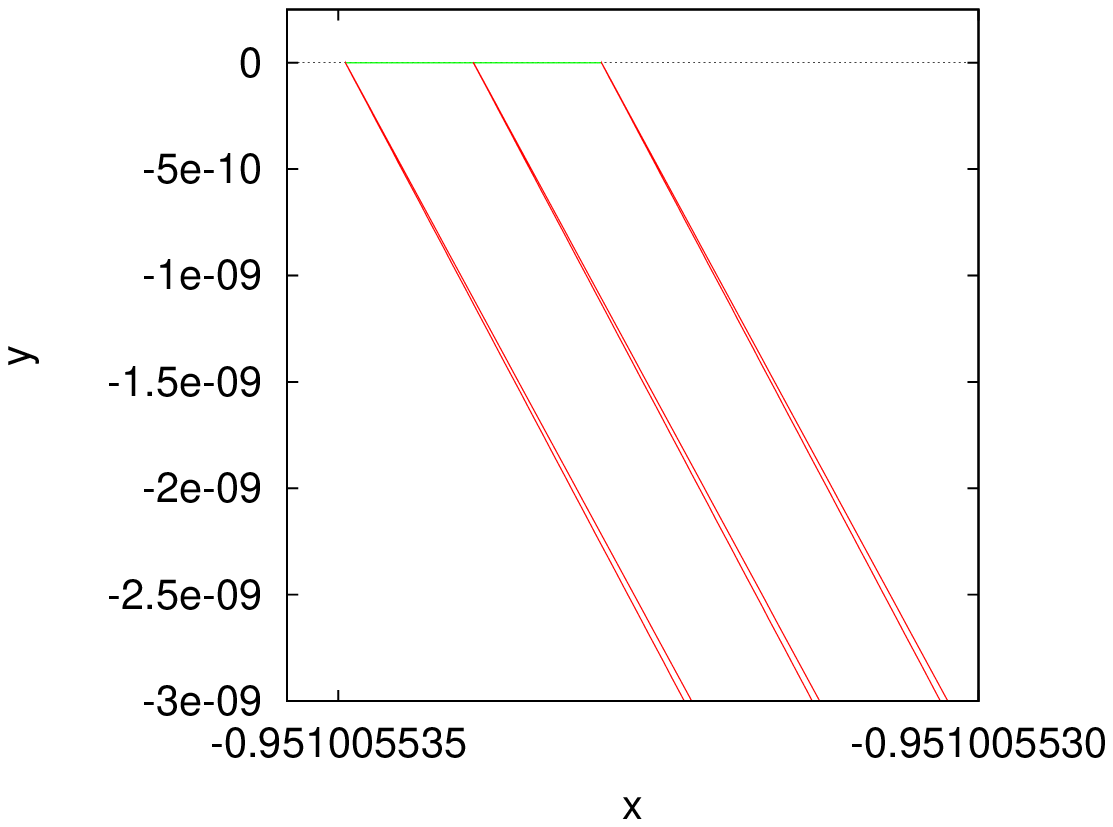}
\end{center}
\caption{Projections of fiber enclosures onto $x,y$ coordinates. }%
\label{fig:fibers-x-y}%
\end{figure}\begin{figure}[h]
\begin{center}
\includegraphics[height=4.1cm]{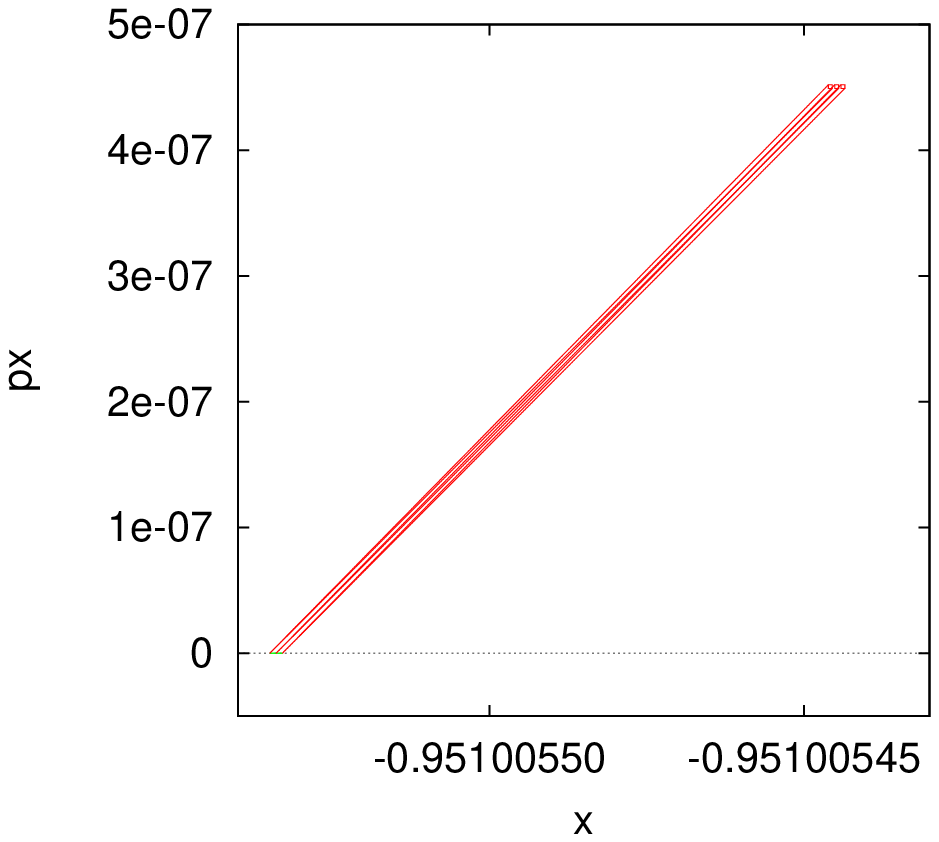}
\includegraphics[height=4.1cm]{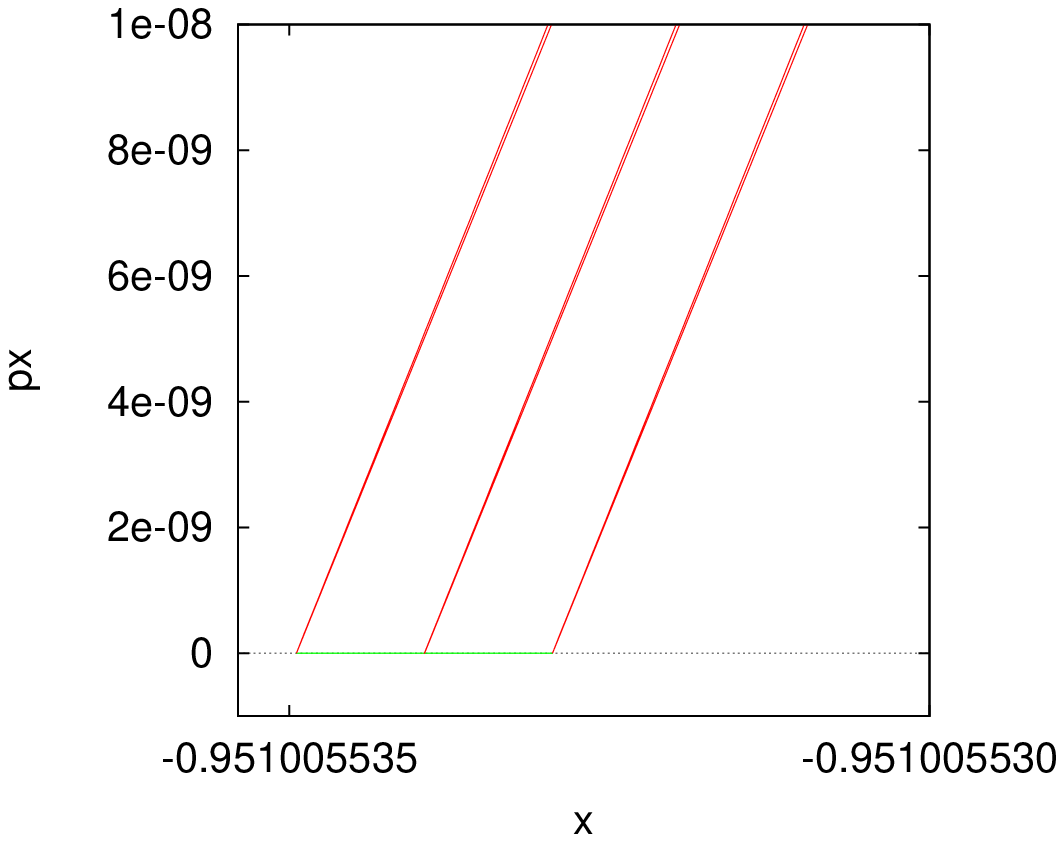}
\end{center}
\caption{Projections of fiber enclosures onto $x,p_{x}$ coordinates. }%
\label{fig:fibers-x-px}%
\end{figure}\begin{figure}[h]
\begin{center}
\includegraphics[height=4.1cm]{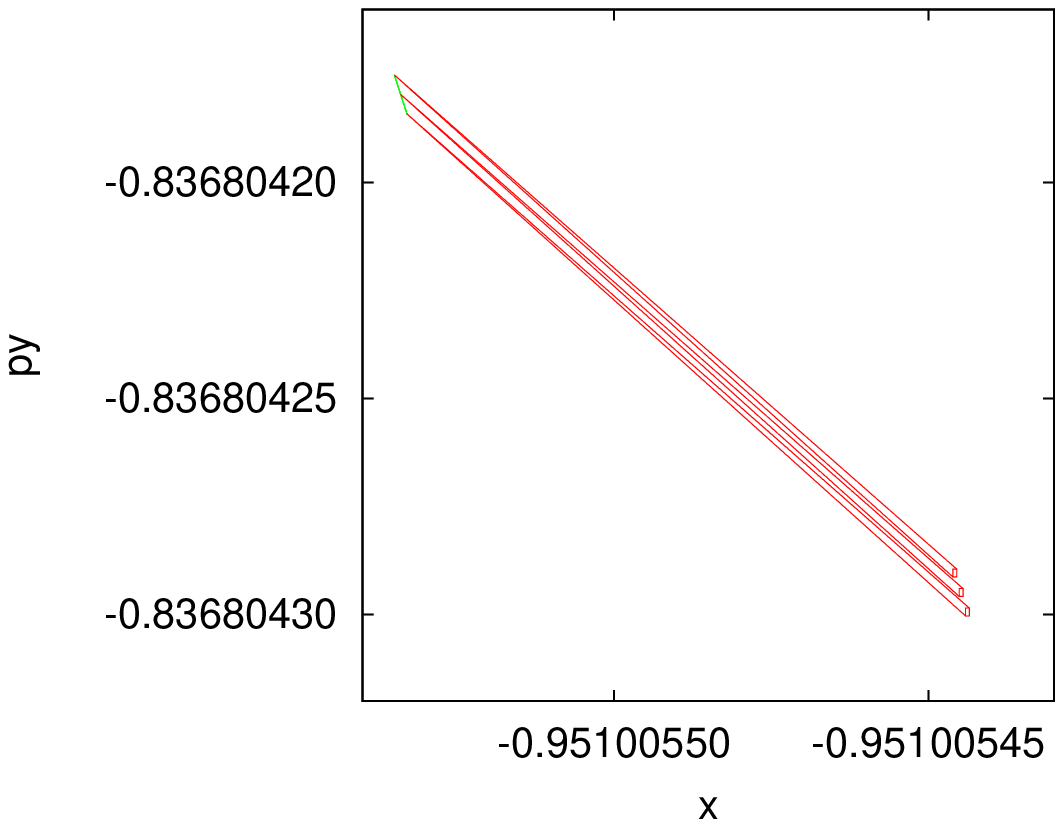}
\includegraphics[height=4.1cm]{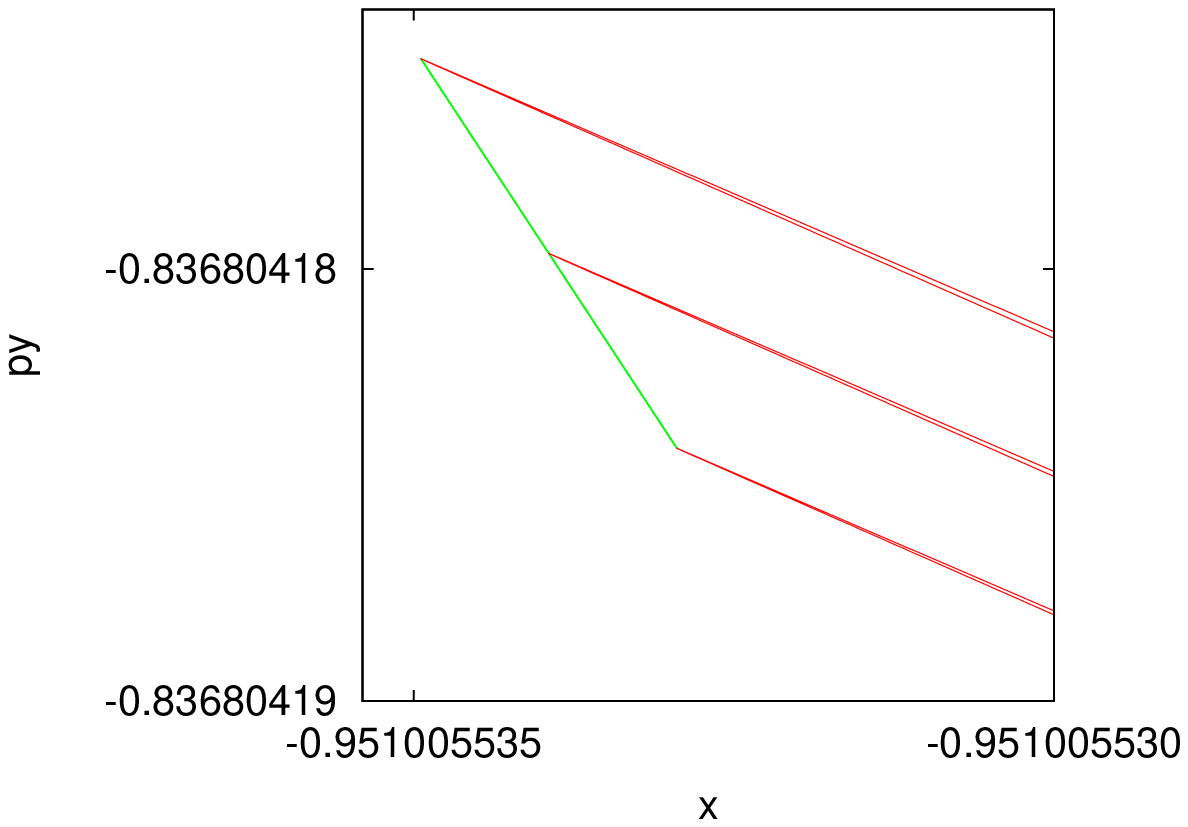}
\end{center}
\caption{Projections of fiber enclosures onto $x,p_{y}$ coordinates. }%
\label{fig:fibers-x-py}%
\end{figure}

\begin{remark}
The range of obtained fibers is small. It is possible to reach
somewhat further from $q(x)$, but this significantly increases the time of
computation, since further subdivision of the set is required.
\end{remark}

\begin{remark}
By using linearization only we have not been able to obtain accurate enough
enclosure of the fibers to handle the proof of transversal intersection of
manifolds which follows in section \ref{sec:intersection-numerics}. Thus the
use of higher order change of variables seems to be needed.
\end{remark}

\subsubsection{Bounds for Intersections of
Manifolds\label{sec:intersection-numerics}}

In this section we present rigorous-computer-assisted results in which we
verify assumptions of Lemma \ref{lem:transversality} and thus conclude the
proof of Theorem \ref{th:main}. For each $x\in I$ there are four points of intersection of $W^{u}(L(x))$ and
$W^{s}(L(x))$ on $\{y=0\}$. They can be seen on the left hand plot in Figure
\ref{fig:intersection}. We consider only the point which is furthermost to the right.

We define the set $B_{E}=\left[  \mathsf{x}^{l},\mathsf{x}^{r}\right]  \times
B_{c}$ with $\mathsf{x}^{l},\mathsf{x}^{r}$ chosen as%
\[
\mathsf{x}^{m}=4.461867506615821\cdot10^{-6},
\]%
\[
\mathsf{x}^{l}=\mathsf{x}^{m}-10^{-11},\qquad\qquad\mathsf{x}^{r}%
=\mathsf{x}^{m}+10^{-11}.
\]

We verify that assumption (\ref{eq:exit-set-ass}) of Lemma
\ref{lem:transversality} holds by computing $\mathcal{G}(B_{E}^{l})$ and
$\mathcal{G}(B_{E}^{r}).$ We plot the obtained bounds in red on the right hand
side plot of Figure \ref{fig:intersection}.

Next, using Remark \ref{rem:slope} we compute
\begin{equation}
\mathbf{a}=\mathtt{[1.7695,1.7725],} \label{eq:a-bound-num}%
\end{equation}
hence assumption (\ref{eq:slope-cond1}) of Lemma \ref{lem:transversality}
holds. Applying Lemma \ref{lem:transversality} concludes the proof of Theorem
\ref{th:main}.

\begin{figure}[h]
\begin{center}
\includegraphics[height=4.1cm]{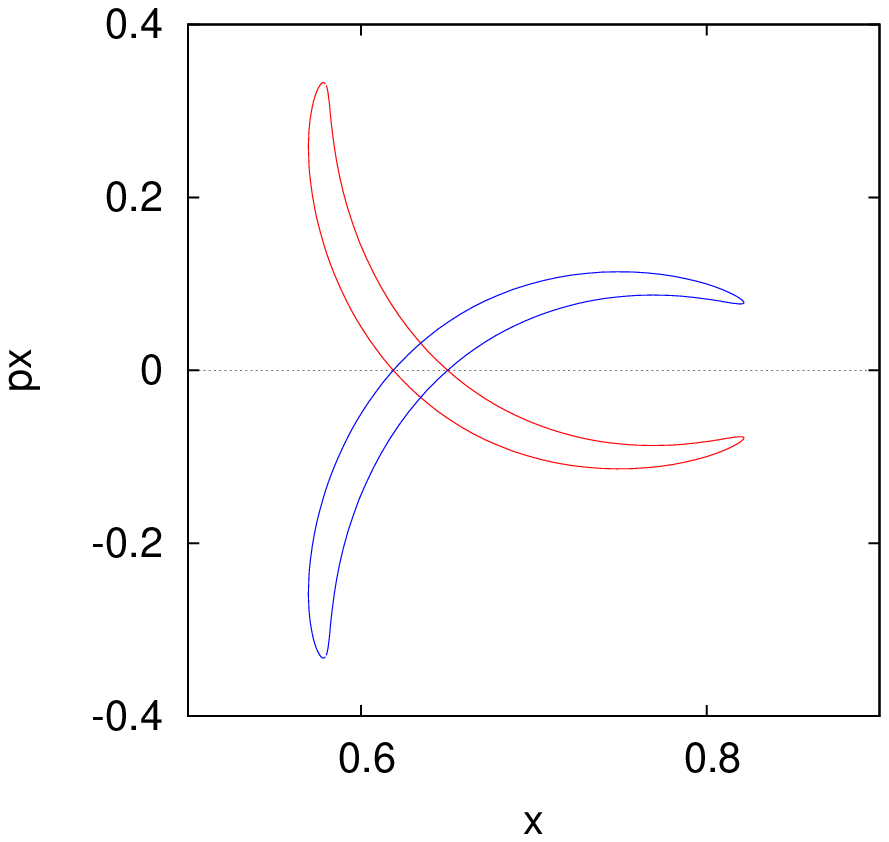}
\includegraphics[height=4.1cm]{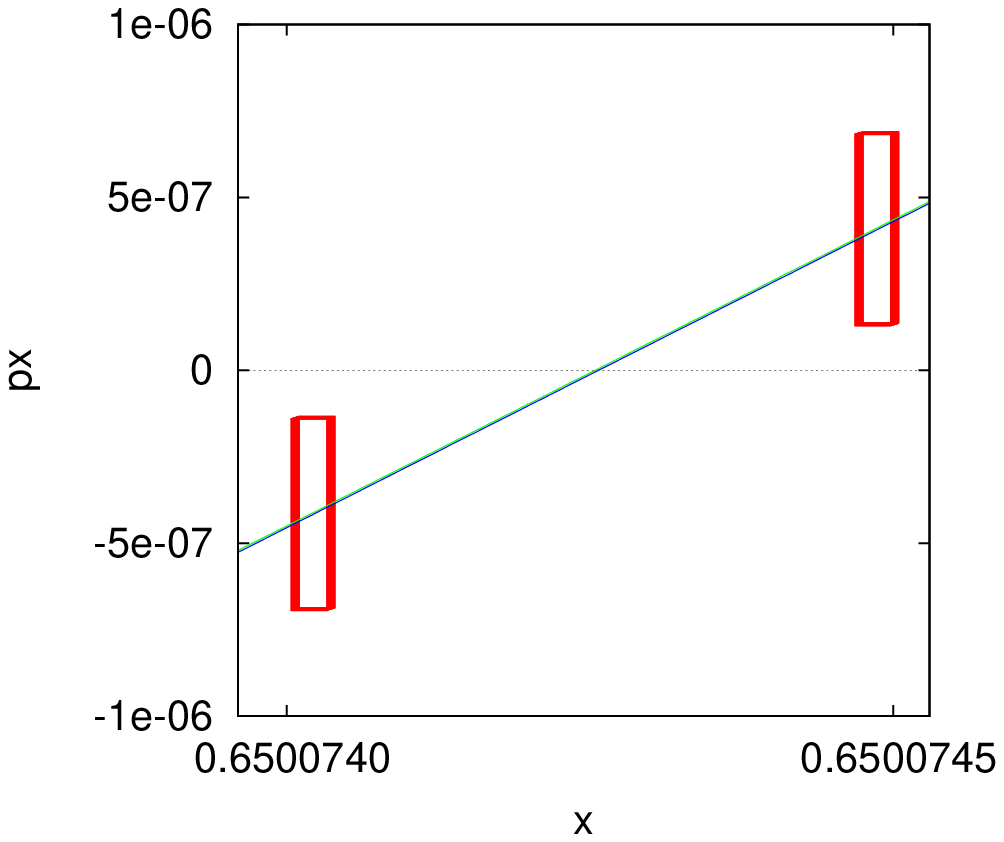}
\end{center}
\caption{Left: Numerical sketch of $W^{s}(L(x))$ (in red) and $W^{u}(L(x))$
(in blue) intersected with $\{y=0\}$. Right: we have proved that
$\{\pi_{x,p_{x}}(W^{u}(L(x))\cap\{y=0\})|x\in I\}$ consists of curves which
pass through two red boxes. We have also proved that their slope is between
$\mathtt{[1.7695,1.7725]}$. The blue/green line is a non-rigorous plot of the
curves.}%
\label{fig:intersection}%
\end{figure}

We needed to subdivide $B_{E}$ into $600$ parts to compute $\left[
D\mathcal{G}(B_{E})V^{+}\right]  $ together with $\left[  D\mathcal{G}(B_{E}%
)V^{-}\right]  $ with sufficient accuracy to obtain (\ref{eq:a-bound-num}).
Verification of assumptions of Lemma \ref{lem:transversality} took $24$
minutes on a standard laptop.

\begin{remark}
From $\mathbf{a}$ and by $S$-symmetry of manifolds $W^{u}(L(x))$ and
$W^{s}(L(x))$, we obtain an estimate $\mathtt{[58.8637}^{\circ}%
\mathtt{,58.9439}^{\circ}\mathtt{]}$ on the angle of intersection of the
curves on the $x,p_{x}$ plane.
\end{remark}

\begin{remark}
At one go we obtain an estimate for a whole family of curves on
\[
\pi_{x,p_{x}}\left(  W^{u}(L(x))\cap\{y=0\}\right)  \quad\text{for }x\in I.
\]
In reality these curves are very close to each other (at furthest distance
along $x$ of about $2.65\cdot10^{-9}$). We plotted (using non-rigorous
computations) two curves which are furthest from each other on the right hand
side plot of Figure \ref{fig:intersection}. One is in green and the other in
blue. They are visible only after a large magnification, and on a paper
printout will merge together. This means that our estimate on the position of
the curves is somewhat rough in comparison to non-rigorous numerical simulation.
\end{remark}

\section{Closing Remarks and Future Work\label{sec:closing-remarks}}
In this paper we have presented a method for proving existence of families of Lyapunov orbits in the planar restricted circular three body problem. The method gives explicit bounds on a curve of initial points, which can continue up to half the distance from $L_2$ to the smaller primary in the Jupiter-Sun system.

We also presented a method of proving transversal intersections of invariant manifolds associated with Lyapunov orbits. The method gives explicit bounds on where the intersection takes place. It has been applied to Lyapunov orbits with energy of the comet Oterma in the Jupiter-Sun system.

In this paper we have focussed on detection of homoclinic intersections. Using identical tools one could also prove heteroclinic intersections of manifolds in the spirit of the work of Wilczak and Zgliczy\'nski \cite{WZ1,WZ2}.

Due to the fact that the presented method gives explicit estimates on the position of investigated manifolds, it is our hope to later apply it to the study of diffusion. 
Here is an outline of future scheme that could be followed to prove diffusion.
The family of Lyapunov orbits is normally hyperbolic, hence survives time periodic perturbations. In non-autonomous setting the system no longer preserves energy, which allows for diffusion between orbits of different energies. Such mechanism has been investigated in \cite{CZ} for the planar restricted elliptic three body problem, for the system with special restriction on parameters. The discussed diffusion follows from the geometric method of Delshams, de la Llave and Seara \cite{DLS3, DLS2, DLS1} and requires computation of Melnikov type integrals along homoclinic orbits of the PRC3BP. Since our method allows for precise and rigorous estimates for such orbits, it is our hope that such integrals could be computed using rigorous-computer assisted techniques. This combined with topological methods \cite{CR, CZ2} for detection of normally hyperbolic manifolds could give first rigorous results for diffusion in the three body problem with real life parameters. From this perspective, the results of this paper are a first step in a larger scheme for investigation of real life systems.

\section{Acknowledgements\label{sec:ackn}}
The author would like to thank Rafael de la Llave for discussions and remarks regarding implementation of the parameterization method. Special thanks go also to Daniel Wilczak for discussions on rigorous-computer-assisted computations using the CAPD library (http://capd.ii.uj.edu.pl).

%\usepackage{pdfsync}
%\usepackage{hyperref}%

%TCIDATA{OutputFilter=latex2.dll}
%TCIDATA{Version=5.00.0.2606}
%TCIDATA{LaTeXparent=0,0,online-edit.tex}

\section{Appendix\label{sec:appendix}}

\subsection{Verification of Cone Conditions}

\begin{lemma}
\label{lem:cc2-expansion-condition}Let $\mathbf{A}$ be an interval matrix of
the form%
\[
\mathbf{A}=\left(
\begin{array}
[c]{cc}%
\mathbf{a}_{11} & \mathbf{\varepsilon}^{T}\\
\mathbf{B} & \mathbf{C}%
\end{array}
\right)
\]
where $\mathbf{a}_{11}=\left[  \underline{a}_{11},\overline{a}_{11}\right]  $
with $\underline{a}_{11}>0$. If for any $\varepsilon\in\mathbf{\varepsilon}$,
$\left\Vert \varepsilon\right\Vert \leq\epsilon$ holds%
\begin{equation}
\frac{\underline{a}_{11}-\epsilon\sqrt{\alpha}}{\sqrt{1+\alpha}}>m,
\label{eq:m-ineq}%
\end{equation}
then for $v=(\mathsf{x},\mathsf{y})$ such that $Q(v)=\alpha\mathsf{x}%
^{2}-\left\Vert \mathsf{y}\right\Vert ^{2}\geq0$ and any $A\in\mathbf{A}$ we
have $\left\Vert Av\right\Vert >m\left\Vert v\right\Vert .$
\end{lemma}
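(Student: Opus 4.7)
The plan is to exploit the block structure of $\mathbf{A}$ and use only the first component of $Av$ to get a lower bound on $\|Av\|$, combined with the cone condition to upper-bound $\|v\|$.

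First, fix any $A\in\mathbf{A}$, written as
$$A = \begin{pmatrix} a_{11} & \varepsilon^T \\ B & C \end{pmatrix},\qquad a_{11}\in\mathbf{a}_{11},\ \varepsilon\in\mathbf{\varepsilon},\ B\in\mathbf{B},\ C\in\mathbf{C},$$
and any $v=(\mathsf{x},\mathsf{y})$ with $Q(v)\ge 0$. The case $\mathsf{x}=0$ forces $\mathsf{y}=0$ by the cone condition and the inequality is trivial, so we may assume $\mathsf{x}\ne 0$. Reading off the first coordinate, $(Av)_1 = a_{11}\mathsf{x} + \varepsilon^T\mathsf{y}$.

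Next I would bound this first coordinate from below. By Cauchy--Schwarz and the hypothesis $\|\varepsilon\|\le\epsilon$, together with the cone condition $\|\mathsf{y}\|\le\sqrt{\alpha}\,|\mathsf{x}|$, we get $|\varepsilon^T\mathsf{y}|\le\epsilon\sqrt{\alpha}\,|\mathsf{x}|$. Since $a_{11}\ge\underline{a}_{11}>0$ and hypothesis (\ref{eq:m-ineq}) implies $\underline{a}_{11}-\epsilon\sqrt{\alpha}>m>0$, the reverse triangle inequality gives
$$|(Av)_1| \;\ge\; \underline{a}_{11}|\mathsf{x}| - \epsilon\sqrt{\alpha}\,|\mathsf{x}| \;=\; (\underline{a}_{11}-\epsilon\sqrt{\alpha})\,|\mathsf{x}|.$$
Therefore $\|Av\|^2 \ge (Av)_1^{\,2} \ge (\underline{a}_{11}-\epsilon\sqrt{\alpha})^{2}\mathsf{x}^{2}$. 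On the other hand, the cone condition yields $\|v\|^{2} = \mathsf{x}^{2}+\|\mathsf{y}\|^{2}\le (1+\alpha)\mathsf{x}^{2}$.

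Combining the two estimates,
$$\frac{\|Av\|}{\|v\|} \;\ge\; \frac{\underline{a}_{11}-\epsilon\sqrt{\alpha}}{\sqrt{1+\alpha}} \;>\; m,$$
where the strict inequality is exactly hypothesis (\ref{eq:m-ineq}). This gives $\|Av\|>m\|v\|$ for every $A\in\mathbf{A}$ and every $v$ in the cone. There is no real obstacle here; the only subtle point is verifying that $\underline{a}_{11}-\epsilon\sqrt{\alpha}$ is positive so that the reverse triangle inequality yields a genuine lower bound, which is automatic from (\ref{eq:m-ineq}) since $m>0$. The only freedom in the argument is the choice to discard the last $n-1$ rows of $Av$; doing so costs nothing because the cone already controls $\|\mathsf{y}\|$ in terms of $|\mathsf{x}|$, and it makes the bound independent of $\mathbf{B}$ and $\mathbf{C}$, which matches the form of the hypothesis.
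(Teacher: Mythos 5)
Your proof is correct and follows essentially the same route as the paper: both arguments reduce $\left\Vert Av\right\Vert$ to the first component $a_{11}\mathsf{x}+\varepsilon^{T}\mathsf{y}$, bound it below by $(\underline{a}_{11}-\epsilon\sqrt{\alpha})\left\vert \mathsf{x}\right\vert$ using the cone inequality $\left\Vert \mathsf{y}\right\Vert \leq\sqrt{\alpha}\left\vert \mathsf{x}\right\vert$, and compare with $\left\Vert v\right\Vert \leq\sqrt{1+\alpha}\left\vert \mathsf{x}\right\vert$. Your write-up is merely more explicit about discarding the remaining rows and about the (degenerate) case $\mathsf{x}=0$, which the paper leaves implicit.
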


\begin{proof}
For $v=(\mathsf{x},\mathsf{y})$ satisfying $Q(v)\geq0,$ we have $\left\Vert
\mathsf{x}\right\Vert ^{2}+\left\Vert \mathsf{y}\right\Vert ^{2}\leq\left\Vert
\mathsf{x}\right\Vert ^{2}(1+\alpha).$ Using (\ref{eq:m-ineq}) this gives the
following estimate
\[
\left\Vert Av\right\Vert \geq\underline{a}_{11}\left\Vert \mathsf{x}%
\right\Vert -\epsilon\left\Vert \mathsf{y}\right\Vert \geq\left(
\underline{a}_{11}-\epsilon\sqrt{\alpha}\right)  \left\Vert \mathsf{x}%
\right\Vert >m\sqrt{\left\Vert \mathsf{x}\right\Vert ^{2}+\left\Vert
\mathsf{y}\right\Vert ^{2}}=m\left\Vert v\right\Vert .
\]

\end{proof}

\begin{lemma}
\label{lem:cc1-matrix}Let $Q\left(  v\right)  =Q(\mathsf{x},\mathsf{y}%
)=\alpha\mathsf{x}^{2}-\left\Vert \mathsf{y}\right\Vert ^{2},$ let $C_{Q}$ be
a diagonal matrix such that $v^{T}C_{Q}v=Q(v),$ and let $\mathbf{A}=[DF(Q^+(v^*))].$
Assume that $\mathbf{D=\mathbf{A}^{T}}C_{Q}\mathbf{\mathbf{A}}$ is an interval
matrix of the form
\[
\mathbf{D}=\left(
\begin{array}
[c]{cc}%
\mathbf{d}_{11} & \boldsymbol{\varepsilon}^{T}\\
\boldsymbol{\varepsilon} & \mathbf{B}%
\end{array}
\right)  .
\]
Assume that $\mathbf{d}_{11}=\left[  \underline{d}_{11},\overline{d}%
_{11}\right]  $ with $\underline{d}_{11}>0$ and that for some $M>0,$ for any
symmetric matrix $B\in\mathbf{B}$
\begin{equation}
\inf\left\{  \lambda|\lambda\in\mathrm{spec}\left(  B\right)  \right\}  >-M.
\label{eq:spect-bound}%
\end{equation}
If for any $\varepsilon\in\boldsymbol{\varepsilon}$ we have $\left\Vert
\varepsilon\right\Vert \leq\epsilon$ and $\underline{d}_{11}-2\epsilon
>M\alpha$, then for any $v_{1},v_{2}\in U,$ $v_{1}\neq v_{2}$ such that
$Q\left(  v_{1}-v_{2}\right)  \geq0$
\[
Q\left(  F(v_{1})-F(v_{2})\right)  >0.
\]

\end{lemma}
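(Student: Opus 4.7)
The plan is to reduce the cone condition $Q(F(v_1)-F(v_2))>0$ to a quadratic form inequality, and then verify that inequality on the cone $Q(v)\ge 0$ using the block-structure hypotheses on $\mathbf{D}$. The starting point is the enclosure \eqref{eq:cc1-interval-matrix}, which says that for any $v_1,v_2\in Q^+(v^*)$,
\[
Q(F(v_1)-F(v_2))\in (v_1-v_2)^T\mathbf{A}^T C_Q \mathbf{A}(v_1-v_2)=(v_1-v_2)^T\mathbf{D}(v_1-v_2).
\]
Thus it suffices to show that $v^T D v > 0$ for every $D\in\mathbf{D}$ and every $v=v_1-v_2\neq 0$ with $Q(v)\ge 0$. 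Since $v^T D v = v^T\tfrac{1}{2}(D+D^T)v$, we may work with the symmetric part and write $D=\bigl(\begin{smallmatrix}d_{11}&\varepsilon^T\\ \varepsilon & B\end{smallmatrix}\bigr)$ with $d_{11}\in\mathbf{d}_{11}$, $\varepsilon\in\boldsymbol{\varepsilon}$, $B\in\mathbf{B}$ symmetric.

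Writing $v=(\mathsf{x},\mathsf{y})$, the quadratic form expands as
\[
v^T D v = d_{11}\mathsf{x}^2 + 2\mathsf{x}\,\varepsilon^T\mathsf{y}+\mathsf{y}^T B\mathsf{y}.
\]
First note that if $\mathsf{x}=0$ then $Q(v)=-\|\mathsf{y}\|^2\ge 0$ forces $v=0$; so we may assume $\mathsf{x}\ne 0$. I will bound each term: $d_{11}\mathsf{x}^2\ge \underline{d}_{11}\mathsf{x}^2$ by the hypothesis $\underline{d}_{11}>0$; $|2\mathsf{x}\,\varepsilon^T\mathsf{y}|\le 2|\mathsf{x}|\,\epsilon\,\|\mathsf{y}\|$ by Cauchy--Schwarz and $\|\varepsilon\|\le\epsilon$; and $\mathsf{y}^T B\mathsf{y}\ge -M\|\mathsf{y}\|^2$ by \eqref{eq:spect-bound} applied to the symmetric $B$. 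Then I invoke the cone constraint $Q(v)\ge 0$, i.e.\ $\|\mathsf{y}\|^2\le\alpha\mathsf{x}^2$, so $\|\mathsf{y}\|\le\sqrt{\alpha}\,|\mathsf{x}|$, and using also $2|\mathsf{x}|\|\mathsf{y}\|\le\mathsf{x}^2+\|\mathsf{y}\|^2\le(1+\alpha)\mathsf{x}^2$ (alternatively $2|\mathsf{x}|\|\mathsf{y}\|\le 2|\mathsf{x}|^2$ when $\alpha\le 1$), the mixed term is absorbed against the leading term.

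Combining the estimates (in the form that matches the paper's hypothesis, where $\alpha\le 1$ is tacitly assumed since $\alpha=2.56\cdot 10^{-6}$ in the application) one obtains
\[
v^T D v \ge \bigl(\underline{d}_{11}-2\epsilon-M\alpha\bigr)\mathsf{x}^2,
\]
which is strictly positive by assumption and by $\mathsf{x}\ne 0$. Since $D\in\mathbf{D}$ was arbitrary, this completes the verification. The only delicate step, which I expect to be the main obstacle, is picking the right Young-type splitting for the cross term $2\mathsf{x}\,\varepsilon^T\mathsf{y}$ so that the resulting coefficient of $\mathsf{x}^2$ matches the sharp hypothesis $\underline{d}_{11}-2\epsilon>M\alpha$ of the lemma; the rest is a direct block-matrix calculation together with the standard spectral lower bound for symmetric $B\in\mathbf{B}$.
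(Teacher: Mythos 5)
Your proposal is correct and follows essentially the same route as the paper: both start from the enclosure \eqref{eq:cc1-interval-matrix}, expand $v^TDv$ blockwise, bound the cross term by $2\epsilon|\mathsf{x}|\,\|\mathsf{y}\|\le 2\epsilon\mathsf{x}^2$ (using the tacit $\alpha\le 1$), apply the spectral bound \eqref{eq:spect-bound} to the $B$-block, and absorb $-M\|\mathsf{y}\|^2$ via $\|\mathsf{y}\|^2\le\alpha\mathsf{x}^2$ to land on the hypothesis $\underline{d}_{11}-2\epsilon>M\alpha$. Your explicit handling of the $\mathsf{x}=0$ case and of symmetrization is a small tidying of steps the paper leaves implicit.
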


\begin{proof}
By (\ref{eq:cc1-interval-matrix}) $Q\left(  F(v_{1})-F(v_{2})\right)  =\left(
v_{1}-v_{2}\right)  ^{T}D\left(  v_{1}-v_{2}\right)  $ for some symmetric
matrix $D\in\mathbf{D}$.

For $v=\left(  \mathsf{x},\mathsf{y}\right)  $ such that $Q(\mathsf{x}%
,\mathsf{y})\geq0$ and for any symmetric $D\in\mathbf{D}$
\[
D=\left(
\begin{array}
[c]{cc}%
d_{11} & \varepsilon^{T}\\
\varepsilon & B
\end{array}
\right)
\]
we compute the following bounds%
\begin{align*}
v^{T}Av  &  =d_{11}\mathsf{x}^{2}+\mathsf{x}\varepsilon^{T}\mathsf{y}%
+\mathsf{y}^{T}\varepsilon\mathsf{x}+\mathsf{y}^{T}B\mathsf{y}\\
&  \geq\underline{d}_{11}\mathsf{x}^{2}-2\epsilon\left\Vert \mathsf{y}%
\right\Vert \left\vert \mathsf{x}\right\vert -M\left\Vert \mathsf{y}%
\right\Vert ^{2}\\
&  \geq\left(  \underline{d}_{11}-2\epsilon\right)  \mathsf{x}^{2}-M\left\Vert
\mathsf{y}\right\Vert ^{2}\\
&  =M\left(  \frac{\underline{d}_{11}-2\epsilon}{M}\mathsf{x}^{2}-\left\Vert
\mathsf{y}\right\Vert ^{2}\right) \\
&  >MQ\left(  \mathsf{x},\mathsf{y}\right) \\
&  >0.
\end{align*}

\end{proof}

\begin{remark}
Assumption (\ref{eq:spect-bound}) is easily verifiable from Gershgorin theorem.
\end{remark}

\subsection{Bounds for the Images in Local Coordinates
\label{sec:Local-map-F(U)}}
Here we give a proof of Lemma \ref{lem:F-image}.
\begin{proof}Inclusion (\ref{eq:F-image}) is equivalent
to showing that for any $\tau\in\mathbf{T,}$ and any $v_{1}\in U_{1}$ there
exists an $v_{2}\mathbf{\ }$in $U_{2}$ such that
\begin{equation}
G(\tau,v_{1},v_{2})=0.\label{eq:G=0}%
\end{equation}
Let us fix a $\tau\in\mathbf{T}$ and $v_{1}\in U_{1}$ and use a notation
$G_{\tau,v_{1}}(v_{2})=G(\tau,v_{1},v_{2})$. Observe that $\left[
DG_{\tau,v_{1}}(U_{2})\right]  \subset\mathbf{A}(U_{2})$ and $\left[
G_{\tau,v_{1}}(v_{0})\right]  \subset\left[  G(\mathbf{T},U_{1},v_{0})\right]
.$ Since from (\ref{eq:Floc-Newton-assumption})
\[
v_{0}-\left[  DG_{\tau,v_{1}}(U_{2})\right]  ^{-1}G_{\tau,v_{1}}(v_{0})\subset
N(\mathbf{T},v_{0},U_{1},U_{2})\subset U_{2},
\]
by the interval Newton method (Theorem \ref{th:interval-Newton}) there exists
a unique $v_{2}=v_{2}\left(  \tau,v_{1}\right)  \in U_{2},$ which satisfies
(\ref{eq:G=0}).
\end{proof}

\begin{remark}
\label{rem:preimage}When applying Lemma \ref{lem:F-image}, due to very strong
hyperbolicity of the map $\Phi_{\tau}$ it pays off to use the mean value
theorem. Taking $U_{1}=v_{1}+B$ we can compute%
\[
N(\mathbf{T},v_{0},U_{1},U_{2})=v_{0}-\left[  \mathbf{A}\left(  U_{2}\right)
\right]  ^{-1}G(\mathbf{T},v_{1},v_{0})-\left[  \left(  \mathbf{A}\left(
U_{2}\right)  ^{-1}\frac{\partial G}{\partial v_{1}}(\mathbf{T},U_{1}%
,v_{0})\right)  B\right]  .
\]
This is a better form since in (below we neglect arguments in order to keep
the formula compact)
\begin{equation}
\mathbf{A}^{-1}\frac{\partial G}{\partial v_{1}}=-D\lambda^{-1}\cdot\left(
\left(  D\psi\right)  ^{-1}\cdot D\tilde{\Phi}_{\tau}\cdot D\psi\right)
,\label{eq:A-implicit}%
\end{equation}
the strong hyperbolic expansion cancels out. This is the main advantage of
Lemma \ref{lem:F-image}.
\end{remark}

Here we give a technical lemma that can be used for computation of
\[
\psi^{-1}(C^{-1}\left(  (x,0,0,\kappa(x))-q^{0}\right)  )\quad\text{for }x\in
I
\]
and $q^{0}=(x^{0},0,0,p_{y}^{0})$. Below, $R$ can be any matrix close to $D\psi^{-1}(0)C^{-1}A$.

\begin{lemma}
\label{lem:Newton-preimage}Let $a\in\mathbb{R}$ and $J_{1}\subset\mathbb{R}$
be from Lemma \ref{lem:Newton-per-orb} and let%
\[
A=\left(
\begin{array}
[c]{llll}%
1 & 0 & 0 & 0\\
0 & 1 & 0 & 0\\
0 & 0 & 1 & 0\\
a & 0 & 0 & 1
\end{array}
\right)  .
\]
Let $\mathbf{B}$ be a set in $\mathbb{R}^{4}$, let $R$ be a $4\times4$ matrix
and let
\[
M:=\left[  A^{-1}PD\psi\left(  R\mathbf{B}\right)  R\right]  ^{-1}\left(
I-x^{0},0,0,J_{1}-p_{y}^{0}\right)  \mathbf{.}%
\]

If%
\begin{equation}
M\subset\mathbf{B} \label{eq:MinR}%
\end{equation}
then $\psi^{-1}(C^{-1}\left(  (x,0,0,\kappa(x))-q^{0}\right)  )\subset
R\mathbf{B}$.
\end{lemma}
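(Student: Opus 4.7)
The plan is to recast the inclusion as a statement about zeros of a nonlinear map and then apply the interval Newton operator (Theorem~\ref{th:interval-Newton}). First, by Lemma~\ref{lem:Newton-per-orb} applied to the data of (\ref{eq:p0y}), for every $x \in I$ we can write $\kappa(x) = a(x - x^0) + \iota$ with $\iota \in J_1$, so substituting into the definition of $A$ gives
\begin{equation*}
(x, 0, 0, \kappa(x)) - q^0 \;=\; A \cdot \bigl(x - x^0,\, 0,\, 0,\, \iota - p_y^0\bigr)^T.
\end{equation*}
Thus the right-hand side ranges over $A \cdot (I - x^0,\, 0,\, 0,\, J_1 - p_y^0)$ as $x$ ranges over $I$, and finding $v = \psi^{-1}(C^{-1}((x, 0, 0, \kappa(x)) - q^0))$ is equivalent to solving $A^{-1} P \psi(v) = w$ for some $w$ in the interval vector $(I - x^0,\, 0,\, 0,\, J_1 - p_y^0)$, where $P$ plays the role of $C$ indicated in the statement.

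Next I would reduce to an interval Newton problem on $\mathbf{B}$ via the change of variables $v = Ru$. Setting $g_w(u) := A^{-1} P \psi(Ru) - w$, a direct check from (\ref{eq:psi-form}) together with $K_i(0) = 0$ for all $i$ shows $\psi(0) = 0$, hence $g_w(0) = -w$. Since $Dg_w(u) = A^{-1} P D\psi(Ru) R$, its interval enclosure over $u \in \mathbf{B}$ is $A^{-1} P D\psi(R\mathbf{B}) R$, and the interval Newton operator centered at $u_0 = 0$ becomes
\begin{equation*}
N(0, \mathbf{B}) \;=\; -\bigl[\,A^{-1} P\, D\psi(R\mathbf{B})\, R\,\bigr]^{-1} g_w(0) \;=\; \bigl[\,A^{-1} P\, D\psi(R\mathbf{B})\, R\,\bigr]^{-1} w.
\end{equation*}
Enclosing over all admissible $w$ yields precisely the set $M$ from the statement.

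The assumption $M \subset \mathbf{B}$ combined with Theorem~\ref{th:interval-Newton} then produces, for each specific $w$ in the interval vector (and therefore for each $x \in I$), a unique $u^* \in \mathbf{B}$ satisfying $g_w(u^*) = 0$. Transporting back through the change of variables gives $\psi^{-1}(C^{-1}((x, 0, 0, \kappa(x)) - q^0)) = Ru^* \in R\mathbf{B}$, which is the claimed inclusion.

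The main obstacle is conceptual rather than computational: one has to recognize that the matrix $A$ is designed precisely to encode the slope $a$ of the curve $\kappa$ supplied by Lemma~\ref{lem:Newton-per-orb}, so that the essentially one-dimensional arc $\{(x, 0, 0, \kappa(x)) : x \in I\}$ is enclosed in the four-dimensional box $A \cdot (I - x^0,\, 0,\, 0,\, J_1 - p_y^0)$ with the tight width $|J_1|$ in the last coordinate rather than a crude bound on $\kappa$ itself. Once this is seen, invertibility of the interval matrix $A^{-1} P\, D\psi(R\mathbf{B})\, R$ required by interval Newton is made plausible by the choice $R \approx D\psi^{-1}(0) C^{-1} A$, which drives its midpoint close to the identity; the actual numerical verification is delegated to rigorous interval arithmetic.
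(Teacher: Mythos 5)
Your proposal is correct and takes essentially the same route as the paper: the same enclosure of the arc $\{(x,0,0,\kappa(x))\}$ via the shear matrix $A$ acting on $(I-x^{0},0,0,J_{1}-p_{y}^{0})$, the same auxiliary map (the paper's $G_{q}(p)=A^{-1}C\psi(Rp)-q$ is your $g_{w}$), and the same application of the interval Newton operator centered at $0$ exploiting $\psi(0)=0$. You also correctly identify that the $P$ appearing in the statement of the lemma should be read as the matrix $C$.
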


\begin{proof}
By Lemma \ref{lem:Newton-per-orb}
\begin{align*}
\left(  x,0,0,\kappa(x)\right)   &  \in(x^{0},0,0,p_{y}^{0})+\left(
I-x^{0},0,0,a(I-x^{0})+J_{1}-p_{y}^{0}\right) \\
&  =q^{0}+A\left(  I-x^{0},0,0,J_{1}-p_{y}^{0}\right)  ,
\end{align*}
hence%
\begin{equation}
(x,0,0,\kappa(x))-q^{0} \in A\left(  I-x^{0},0,0,J_{1}-p_{y}^{0}\right)  .
\label{eq:qx-by-A}%
\end{equation}

Let%
\[
G_{q}(p)=A^{-1}C\psi(Rp)-q
\]
If we can show that for any $q\in\left(  I-x^{0},0,0,J_{1}-p_{y}^{0}\right)  $
there exists a $p\in\mathbf{B}$ such that
\begin{equation}
G_{q}(p)=0 \label{eq:temp-G0}%
\end{equation}
then%
\[
\psi^{-1}\left(  C^{-1}Aq\right)  =Rp,
\]
hence by (\ref{eq:qx-by-A})
\[
\psi^{-1}(C^{-1}\left(  (x,0,0,\kappa(x))-q^{0}\right)  )\subset R\mathbf{B.}%
\]

To show (\ref{eq:temp-G0}) we apply the interval Newton method (Theorem
\ref{th:interval-Newton}). Since $\psi(0)=0$ we can compute%
\begin{align*}
N(0,\mathbf{B})  &  =-\left[  \frac{d}{dp}G_{q}(\mathbf{B})\right]  ^{-1}%
G_{q}(0)\\
&  =-\left[  A^{-1}CD\psi(R\mathbf{B})R\right]  ^{-1}\left(  -q\right) \\
&  \subset M,
\end{align*}
and by (\ref{eq:MinR}) combined with Theorem \ref{th:interval-Newton} obtain
(\ref{eq:temp-G0}), and hence obtain our claim.
\end{proof}

%\usepackage{pdfsync}
%\usepackage{hyperref}%

%TCIDATA{OutputFilter=latex2.dll}
%TCIDATA{Version=5.00.0.2606}
%TCIDATA{LaTeXparent=0,0,online-edit.tex}

\end{document}